\theoremstyle{proclaim}
\newtheorem{theorem}{Theorem}[section]
\newtheorem{lemma}[theorem]{Lemma}
\newtheorem{corollary}[theorem]{Corollary}
\newtheorem{proposition}[theorem]{Proposition}
\theoremstyle{statement}
\newtheorem{remark}[theorem]{Remark}
\newtheorem{definition}[theorem]{Definition}
\numberwithin{equation}{section}
\newcommand{\Z}{\mathbb{Z}}
\newcommand{\OO}{\mathcal{O}}
\newcommand{\iO}{\mathcal{JO}}
\newcommand{\F}{\mathfrak{F}}
\newcommand{\PP}{\mathbb{P}}
\newcommand{\C}{\mathbb{C}}
\newcommand{\B}{\mathbb{B}}
\newcommand{\R}{\mathbb{R}}
\newcommand{\T}{\mathbb{T}}
\newcommand{\bS}{\mathbb{S}}
\newcommand{\mA}{\mathcal{A}}
\newcommand{\mR}{\mathcal{R}}
\newcommand{\mH}{\mathcal{H}}
\newcommand{\mHR}{\mathcal{HR}}
\newcommand{\mT}{\mathcal{T}}
\newcommand{\PC}{\PP^n(\C)}
\newcommand{\SU}{\mathrm{SU}}
\newcommand{\PSU}{\mathrm{PSU}}
\newcommand{\SL}{\mathrm{SL}}
\newcommand{\PSL}{\mathrm{PSL}}
\newcommand{\Lie}{\mathrm{Lie}}
\newcommand{\sli}{\mathfrak{sl}}
\begin{document}% Do not forget this command!

\title[Quasi-radial quasi-homogeneous symbols and Lagrangian
frames]{Toeplitz operators with quasi-radial quasi-homogeneous symbols
  and bundles of Lagrangian frames}% The title of the article

\author[Raul Quiroga-Barranco {\protect \and} Armando
Sanchez-Nungaray]{Raul Quiroga-Barranco {\protect \and} Armando Sanchez-Nungaray}
\address{Raul Quiroga-Barranco, Centro de Investigaci\'on en
  Matem\'aticas, Guanajuato, M\'exico}
\email{quiroga@cimat.mx}
\address{Armando Sanchez-Nungaray, Centro de Investigaci\'on en
  Mate\-m\'aticas, Guanajuato, M\'exico}
\email{armandos@cimat.mx}

\begin{abstract}
  We prove that the quasi-homogenous symbols on the projective space
  $\PC$ yield commutative algebras of Toeplitz operators on all
  weighted Bergman spaces, thus extending to this compact case known
  results for the unit ball $\B^n$. These algebras are Banach but not
  $C^*$. We prove the existence of a strong link between such symbols
  and algebras with the geometry of $\PC$.
\end{abstract}

\subjclass{Primary 47B35; Secondary 32A36, 32M15, 53C12}

\keywords{Toeplitz operators, commutative Banach algebras, Lagrangian
  frames, complex projective space}

\thanks{The first named author was partially supported by SNI-Mexico
  and by a Conacyt grant. The second named author was partially
  supported by a Conacyt postdoctoral fellowship.}

\maketitle

\section{INTRODUCTION}
The study of commutative algebras of Toeplitz operators has shown to
be a very interesting subject. Some previous results in this topic
serve as background to this work. First, it was shown the existence of
symbols defining interesting commutative $C^*$-algebras of Toeplitz
operators on bounded symmetric domains (see \cite{GQV-disk},
\cite{QV-Reinhardt}, \cite{QV-Ball1} and \cite{QV-Ball2}).  Also, it
was exhibited in \cite{NikolaiQuasi} the existence of Banach algebras,
which are not $C^*$, of commutative Toeplitz operators on the unit
ball $\B^n$. And, in \cite{QS-Proj} we constructed commutative
$C^*$-algebras of Toeplitz operators on complex projective spaces.

A remarkable fact is that the currently known commutative
$C^*$-algebras of Toeplitz operators on $\B^n$ are naturally
associated to Abelian subgroups of the group of biholomorphisms of
$\B^n$. In fact, their systematic description is best understood with
the use of such groups of biholomorphisms (see \cite{QV-Ball1} and
\cite{QV-Ball2}). Furthermore, this provided the guiding light to
construct commutative $C^*$-algebras of Toeplitz operators in the
complex projective space $\PC$: the currently known $C^*$-algebras for
$\PC$ are naturally associated and described from the maximal tori of
the group of isometric biholomorphisms of $\PC$ (see \cite{QS-Proj}).

The known Banach (not $C^*$) algebras of commutative Toeplitz
operators first introduced in \cite{NikolaiQuasi} for $\B^n$ are given
by the so called quasi-homogeneous symbols. Such symbols are defined
in terms of radial and spherical coordinates of components in $\B^n$
(see Section~\ref{sec:quasi} below). However, their introduction
lacked the stronger connection with the geometry of the domain
observed for the commutative $C^*$-algebras of Toeplitz operators on
$\B^n$.

Given these lines of research, there are two natural problems to
consider. First, to determine whether or not there are any interesting
Banach algebras, that are not $C^*$, of commutative Toeplitz operators
on $\PC$. Second, assuming that such Banach algebras exist, to find
any possible special links between the geometry of $\PC$ and those
Banach algebras of commutative Toeplitz operators. The goal of this
work is to solve these problems.

On one hand, we define quasi-homogeneous symbols in the complex
projective space $\PC$, and show that these provide Banach algebras of
commutative Toeplitz operators on every weighted Bergman space of
$\PC$. The results that exhibit the commuting Toeplitz operators are
obtained in Section~\ref{sec:commutativity}, where
Theorem~\ref{thm:Banachalgebra} is the main result. On the other hand,
we also prove that the Banach algebras defined by quasi-homogeneous
symbols turn out to have a strong connection with the geometry of the
supporting space $\PC$; the main results in this case are presented in
Section~\ref{sec:Lagrangian}. In particular, we prove that the
quasi-homogeneous symbols on $\PC$ can be associated to an Abelian
group of holomorphic isometries of the corresponding space (see
Theorem~\ref{thm:torus-mA}). Such group is a subgroup of a maximal
torus in the corresponding isometry group.

We further prove that the groups associated to quasi-homogeneous
symbols afford pairs of foliations with distinguished Lagrangian and
Riemannian geometry known as Lagrangian frames (see
Section~\ref{sec:Lagrangian} below and \cite{QV-Reinhardt},
\cite{QV-Ball1} and \cite{QV-Ball2}). This recovers the behavior
observed for the $C^*$-algebras of commutative Toeplitz operators
constructed in \cite{QV-Ball1}, \cite{QV-Ball2} and \cite{QS-Proj},
for which such Lagrangian frames appear as well. Nevertheless, it is
important to note a key difference between the $C^*$ case and the
Banach case. For the $C^*$-algebras of Toeplitz operators on $\B^n$
and $\PC$, as constructed in \cite{QV-Ball1}, \cite{QV-Ball2} and
\cite{QS-Proj}, the Lagrangian frames are obtained for the whole
space, i.e.~they come from Lagrangian submanifolds of the whole space,
either $\B^n$ or $\PC$. But for the Banach algebras given by the
quasi-homogeneous symbols considered here the Lagrangian frames are
obtained on submanifolds of $\PC$ that provide both an stratification
and a partition into principal fiber bundles. The existence of the
principal bundles and the Lagrangian frames on suitable submanifolds
is obtained in Theorems~\ref{thm:partitions-bundles} and
\ref{thm:Lframes}, respectively. Nevertheless, as in the case of
$C^*$-algebras, the Riemannian leaves of the Lagrangian frames that we
exhibit are precisely the orbits of the Abelian group associated to
the symbols. Also, it is proved in Theorem~\ref{thm:bundleaut} that a
full maximal torus of isometries continues to play an importante role,
since the complement to the group that defines the fiberwise
Lagrangian frames acts by automorphisms of such frames.

We want to acknowledge the input of Pedro Luis del Angel, from Cimat,
provided through several conversations with him. We also want to thank
an anonymous referee whose remarks allowed us to greatly improve a
previous version of this work.

\section{PRELIMINARIES ON THE GEOMETRY AND ANALYSIS OF
  $\PC$}\label{sec:prel-PC}

In this section we establish our notation concerning the
$n$-dimensional complex projective space $\PC$. We will freely use the
well known properties of the projective space and refer to the
bibliography for further details. In particular, we recall that for $w
\in \C^{n+1}\setminus \{0\}$ the element $[w] \in \PC$ is said to have
homogeneous coordinates given by $w$.

There is a natural realization of $\C^n$ as an open conull dense
subset given by
\[
  \C^n \rightarrow \PC, \quad  z \mapsto [1,z],
\]
which defines a biholomorphism onto its image. Note that the points of
$\PC$ are denoted by $[w]$, the complex line through $w \in
\C^{n+1}\setminus\{0\}$. We will refer to this embedding as the
canonical embedding of $\C^n$ into $\PC$.

Let us denote by $\omega$ the canonical K\"ahler structure on $\PC$
that defines the Fubini-Study metric, whose volume is then given by
$\Omega = (\omega/2\pi)^n$. These induce on $\C^n$ the following
K\"ahler form and volume element, respectively
\begin{align*}
  \omega_0 &= i\frac{(1+|z|^2) \sum_{k=1}^n dz_k\wedge d\overline{z}_k
    - \sum_{k,l=1}^n \overline{z}_k z_l
    dz_k\wedge d\overline{z}_l}{(1+|z|^2)^2}, \\
  \Omega_0 &= \frac{1}{\pi^n} \frac{dV(z)}{(1 + |z_1|^2 + \dots +
    |z_n|^2)^{n+1}},
\end{align*}
where $dV(z)$ denotes the Lebesgue measure on $\C^n$.

Let $H$ denote the dual bundle of the tautological line bundle of
$\PC$. We recall that $H$ carries a canonical Hermitian metric $h$
obtained from the (flat) Hermitian metric of $\C^{n+1}$. Then, it is
also well known that the curvature $\Theta$ of $(H,h)$ satisfies the
identity
\[
\Theta = -i \omega,
\]
which amounts to say that $(H,h)$ is a quantum line bundle over $\PC$.

We will denote by $\Gamma(\PC, H^m)$ and $\Gamma_{hol}(\PC, H^m)$ the
smooth and holomorphic sections of $H^m$, respectively. Note that
$H^m$ denotes the $m$-th tensorial power of $H$. Clearly, both of
these spaces lie inside $L_2(\PC,H^m)$.

For every $m\in \Z_+$ and with respect to the canonical embedding of
$\C^n$ into $\PC$, we define the weigthed measure on $\PC$ with weight
$m$ by
\begin{align*}
d\nu_m(z) &= \frac{(n+m)!}{m!}\frac{\Omega(z)}{(1 + |z_1|^2 + \dots +
  |z_n|^2)^m} \\
&= \frac{(n+m)!}{\pi^n m!} \frac{dV(z)}{(1 + |z_1|^2 +
  \dots + |z_n|^2)^{n+m+1}}.
\end{align*}
A simple computation shows that $d\nu_m$ is a probability measure for
all $m \in \Z_+$.  For simplicity, we will use the same symbol
$d\nu_m$ to denote the weighted measures for both $\PC$ and $\C^n$.
It is also straightforward to show that the canonical embedding of
$\C^n$ into $\PC$ induces a canonical isometry
\[
\Phi : L_2(\PC, H^m) \rightarrow L_2(\C^n, \nu_m)
\]
with respect to which we will identify these spaces in the rest of
this work. Also, we will denote by $\left<\cdot,\cdot\right>_m$ the
inner product of this Hilbert spaces.

The weighted Bergman space on $\PC$ with weight $m\in \Z_+$ is defined
by:
\begin{align*}
  \mA^2_m(\PC) &= \{ \zeta \in L_2(\PC,H^m) : \zeta \text{ is
    holomorphic}
  \}\\
  &= \Gamma_{hol}(\PC,H^m).
\end{align*}

These Bergman spaces are finite-dimensional and are described by the
following well known result.

\begin{proposition}\label{prop:sections_polys}
  For every $m \in \Z_+$, the Bergman space $\mA^2_m(\PC)$ satisfies
  the following properties.
  \begin{enumerate}
  \item $\mA^2_m(\PC)$ can be identified with the space
    $P^{(m)}(\C^{n+1})$ of homogeneous polynomials of degree $m$ over
    $\C^{n+1}$.
  \item For $\Phi : L_2(\PC, H^m) \rightarrow L_2(\C^n, \nu_m)$ the
    canonical isometry described above, we have $\Phi(\mA^2_m(\PC)) =
    P_m(\C^n)$, the space of polynomials on $\C^n$ of degree at most
    $m$.
  \end{enumerate}
\end{proposition}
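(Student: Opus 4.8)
The plan is to treat part (i) as the classical identification of the global holomorphic sections of the tensor powers of $H$ with homogeneous polynomials, and then to derive part (ii) by dehomogenizing in the affine chart provided by the canonical embedding.

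For part (i), I would construct a natural linear map $P^{(m)}(\C^{n+1}) \to \Gamma_{hol}(\PC, H^m)$ directly from the fiber description. The fiber of the tautological line bundle at $[w]$ is the line $\C w \subset \C^{n+1}$, so the fiber of $H^m$ at $[w]$ is $(\C w)^{*\otimes m}$. Given $p \in P^{(m)}(\C^{n+1})$, I define a section $s_p$ by letting $s_p([w])$ be the element of $(\C w)^{*\otimes m}$ determined by $w^{\otimes m}\mapsto p(w)$. The homogeneity relation $p(\lambda w)=\lambda^m p(w)$ guarantees independence of the chosen representative $w$, and $s_p$ is holomorphic because $p$ is. Injectivity of $p\mapsto s_p$ is clear. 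For surjectivity I would pull a holomorphic section $s$ back along the projection $\C^{n+1}\setminus\{0\}\to\PC$, using the tautological inclusion to produce a holomorphic function on $\C^{n+1}\setminus\{0\}$ that is homogeneous of degree $m$. Since $n+1\geq 2$, Hartogs' extension theorem extends it holomorphically across the origin, and degree-$m$ homogeneity then forces it to be a homogeneous polynomial of degree $m$ whose associated section is $s$. This yields the identification and recovers the finite-dimensionality, with $\dim\mA^2_m(\PC)=\binom{n+m}{m}$.

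For part (ii), I would work in the chart $U_0=\{[w]: w_0\neq 0\}$, which is exactly the image of the canonical embedding $z\mapsto[1,z]$. The degree-one polynomial $w_0$ determines, via the map of part (i), a nowhere-vanishing holomorphic section of $H$ over $U_0$, whose $m$-th power trivializes $H^m$ there. Under this trivialization the section $s_p$ attached to $p$ becomes $[w]\mapsto p(w)/w_0^m$, which in the coordinate $z$ (where $w=(1,z)$) equals $p(1,z)$, a polynomial of degree at most $m$. The isometry $\Phi$ is, by construction, multiplication by this holomorphic trivializing frame together with the pointwise metric normalization $(1+|z|^2)^{-m/2}$ built into $d\nu_m$; in particular $\Phi$ and $\Phi^{-1}$ preserve holomorphicity over the conull set $U_0$. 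Since dehomogenization $p\mapsto p(1,z)$ maps $P^{(m)}(\C^{n+1})$ bijectively onto $P_m(\C^n)$, with inverse the homogenization $q\mapsto w_0^m q(w_1/w_0,\dots,w_n/w_0)$, combining with part (i) gives $\Phi(\mA^2_m(\PC))=P_m(\C^n)$; the constant $\sqrt{(n+m)!/m!}$ appearing in the normalization maps the space $P_m(\C^n)$ onto itself and so does not affect the image.

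The main obstacle is the surjectivity step in part (i): one must justify that the pulled-back section extends across the removed origin and is genuinely a polynomial, which is precisely where Hartogs' extension theorem and the degree-$m$ homogeneity cooperate. The secondary point requiring care is in part (ii), namely verifying that $\Phi$ is indeed implemented by the holomorphic frame $w_0^m$ up to the metric factor, so that the degree bookkeeping reduces exactly to the passage between homogeneous polynomials of degree $m$ on $\C^{n+1}$ and polynomials of degree at most $m$ on $\C^n$.
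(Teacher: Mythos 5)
Your argument is correct: the paper states this proposition without proof, citing it as well known, and your construction of $p \mapsto s_p$, the Hartogs-plus-homogeneity argument for surjectivity onto $\Gamma_{hol}(\PC,H^m)$, and the dehomogenization via the trivializing frame $w_0^m$ on the chart $\{w_0 \neq 0\}$ together constitute precisely the standard proof being invoked. Nothing further is needed.
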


In what follows, we will use this realization of the Bergman spaces
without further notice.

Recall the following notation for multi-indices $\alpha, \beta \in
\Z_+^n$ and $z \in \C^n$
\begin{align*}
  |\alpha| &= \alpha_1 + \dots + \alpha_n, \\
  \alpha! &= \alpha_1! \dots \alpha_n!,\quad \alpha \in \Z_+^n, \\
  z^\alpha &= z^{\alpha_1}\dots z^{\alpha_n}, \\
  \delta_{\alpha,\beta} &= \delta_{\alpha_1,\beta_1} \dots
  \delta_{\alpha_n,\beta_n}.
\end{align*}

The Bergman space $\mA^2_m(\PC)$ has a basis consisting of the
polynomials $z^{\alpha}=z_1^{\alpha_1}\dots z_n^{\alpha_n}$ where
$\alpha \in \Z_+^n$ and $|\alpha| \leq m$. Hence we will consider the
set
\[
J_n(m) =\{ \alpha\in\Z_+^n : |\alpha| \leq m \}.
\]
More precisely, an easy computation shows that the set
\begin{equation}\label{eq:orth-basis}
\left\{\left(\frac{m!}{\alpha!
      (m-|\alpha|)!}\right)^{\frac{1}{2}}z^{\alpha}: \alpha\in J_n(m)
\right\}
\end{equation}
is an orthonormal basis of $\mA^2_m(\PC)$.

For $\psi\in L_2(\PC,H^m)$, and considering the identification $\Phi$,
we define the Bergman projection by
\[
B_m(\psi)(z)=\frac{(n+m)!}{\pi^n m!} \int_{\C^n} \frac{\psi(w)K(z,w)
  dV(w)}{(1+w_1\overline{w}_1+\cdots +w_n\overline{w}_n)^{n+m+1}}
\]
where
\[
K(z,w)=(1+z_1\overline{w}_1\cdots +z_n\overline{w}_n)^m.
\]
Note that the above integral is a polynomial of degree at most $m$ in
the variable $z$, and so it defines an element of $\mA^2_m(\PC)$ with
respect to the identification $\Phi$.  Also, the operator $B_m$
satisfies the well known reproducing property.

\begin{proposition}
  If $\psi \in L_2(\PC,H^m)$, then $B_m(\psi)$ belongs to the weighted
  Bergman space $\mA^2_m(\PC)$. Also, $B_m(\psi)=\psi$ if $\psi\in
  \mA^2_m(\PC)$.
\end{proposition}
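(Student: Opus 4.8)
The plan is to verify the two reproducing-type claims for the Bergman projection $B_m$ directly from its integral definition, using the explicit orthonormal basis in~\eqref{eq:orth-basis}. The key point is that the kernel $K(z,w) = (1 + z_1\overline{w}_1 + \dots + z_n\overline{w}_n)^m$ expands, by the multinomial theorem, as
\begin{equation*}
K(z,w) = \sum_{\alpha \in J_n(m)} \frac{m!}{\alpha!\,(m-|\alpha|)!}\, z^\alpha \overline{w}^\alpha,
\end{equation*}
so that $K(\cdot, w)$ is, for each fixed $w$, a polynomial in $z$ of degree at most $m$; by Proposition~\ref{prop:sections_polys}(ii) this already shows $K(\cdot,w) \in \mathcal{A}^2_m(\PC)$, and since $B_m(\psi)$ is an integral superposition of the functions $z \mapsto K(z,w)$ against the measure $d\nu_m(w)$, it follows that $B_m(\psi) \in \mathcal{A}^2_m(\PC)$, giving the first assertion.

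For the reproducing property, first I would record that the functions $e_\alpha(z) = \bigl(\tfrac{m!}{\alpha!\,(m-|\alpha|)!}\bigr)^{1/2} z^\alpha$, for $\alpha \in J_n(m)$, form an orthonormal basis of $\mathcal{A}^2_m(\PC)$, so the kernel rewrites compactly as
\begin{equation*}
K(z,w) = \sum_{\alpha \in J_n(m)} e_\alpha(z)\, \overline{e_\alpha(w)}.
\end{equation*}
Hence $K$ is precisely the reproducing kernel built from the orthonormal basis, and for any $\psi \in \mathcal{A}^2_m(\PC)$ one has $B_m(\psi)(z) = \langle \psi, K(z,\cdot)\rangle_m = \sum_\alpha \langle \psi, e_\alpha\rangle_m\, e_\alpha(z) = \psi(z)$, where the middle equality uses the definition of $B_m$ as integration against $K$ with respect to $d\nu_m$, and the last uses the basis expansion of $\psi$. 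I would make this rigorous by computing $B_m(e_\beta)$ for a single basis element: linearity and continuity then propagate the identity $B_m(e_\beta) = e_\beta$ to all of $\mathcal{A}^2_m(\PC)$.

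The computation $B_m(e_\beta) = e_\beta$ reduces, after inserting the expansion of $K$ and interchanging the finite sum with the integral, to the orthogonality relations
\begin{equation*}
\frac{(n+m)!}{\pi^n m!} \int_{\C^n} \frac{w^\beta\, \overline{w}^\alpha \, dV(w)}{(1+|w|^2)^{n+m+1}} = \frac{\alpha!\,(m-|\alpha|)!}{m!}\,\delta_{\alpha,\beta},
\end{equation*}
which is exactly the statement that the $e_\alpha$ are orthonormal in $L_2(\C^n,\nu_m)$; this is the ``easy computation'' already asserted below~\eqref{eq:orth-basis}, reducing to standard beta/gamma integrals in polar coordinates (the angular part forces $\alpha = \beta$ and the radial part fixes the normalizing constant). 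The main obstacle, such as it is, is purely bookkeeping: justifying the interchange of integration and summation, which is immediate here since the sum over $J_n(m)$ is finite and each integrand is absolutely integrable against the probability measure $d\nu_m$. I would note that since $\mathcal{A}^2_m(\PC)$ is finite-dimensional, no convergence subtleties beyond this arise, and the identity $B_m(\psi) = \psi$ for $\psi \in \mathcal{A}^2_m(\PC)$ follows at once.
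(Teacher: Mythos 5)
Your argument is correct. The paper itself offers no proof of this proposition (it is stated as a well-known fact about the reproducing kernel), so there is nothing to compare against; your proof is the standard one and fills the gap cleanly: the multinomial expansion of $K(z,w)$ identifies it with $\sum_{\alpha} e_\alpha(z)\overline{e_\alpha(w)}$ for the orthonormal basis of \eqref{eq:orth-basis}, the finiteness of $J_n(m)$ makes the sum--integral interchange trivial, and both claims then reduce to the orthonormality relations. The only blemish is notational: with the paper's convention $\langle f,g\rangle_m=\int f\overline{g}\,d\nu_m$, the definition of $B_m$ reads $B_m(\psi)(z)=\langle \psi, K(\cdot,z)\rangle_m$ rather than $\langle \psi, K(z,\cdot)\rangle_m$ (the two differ by the Hermitian symmetry $K(z,w)=\overline{K(w,z)}$); this does not affect the substance, since your subsequent expansion $B_m(\psi)=\sum_\alpha \langle\psi,e_\alpha\rangle_m\,e_\alpha$ is the correct one.
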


Using this, we define the Toeplitz operator $T_a$ on $\mA^2_m(\PC)$
with bounded symbol $a \in L_\infty(\PC)$ by
\[
T_a(\varphi) = B_m(a \varphi),
\]
for every $\varphi \in \mA^2_m(\PC)$.

Let us denote by $\bS^n$ the unit sphere in $\C^n$.  In this work, we
will use the following identity on the sphere $\bS^n$
\begin{equation}\label{int-sphere}
  \int_{\bS^n} \xi^{\alpha} \overline{\xi}^{\beta} dS(\xi)=
  \delta_{\alpha,\beta}\frac{2\pi^n \alpha!}{(n-1+|\alpha|)!}
\end{equation}
where $dS$ be the corresponding surface measure on $\bS^n$ (see
\cite{Zhu}).

\section{TOEPLITZ OPERATORS WITH QUASI-HOMOGENEOUS SYMBOLS}
\label{sec:quasi}
Quasi-homogeneous symbols were introduced in \cite{NikolaiQuasi} on
the unit ball $\B^n$ in $\C^n$. The same sort of symbols can be
defined on the complex projective space $\PC$ using the homogeneous
coordinates of its elements.

Let $k=(k_0,\dots,k_l) \in \Z_+^{l+1}$ be a multi-index so that $|k| =
n+1$. We will call such multi-index $k$ a partition of $n+1$. For the
sake of definiteness, we will always assume that $k_0 \leq \dots \leq
k_l$. This partition provides a decomposition of the coordinates $w
\in \C^{n+1}$ as $w = (w_{(0)}, \dots, w_{(l)}$) where
\[
w_{(j)} = (w_{k_0+\dots+k_{j-1}+1}, \dots, w_{k_0+\dots+k_{j}}),
\]
for every $j = 0, \dots, l$, and the empty sum is $0$ by
convention. For $w \in \C^{n+1}$, we define $r_{j} = |w_{(j)}|$ and
\[
\xi_{(j)} = \frac{w_{(j)}}{r_{j}}
\]
if $w_{(j)} \not= 0$. Besides the quasi-radii $(r_{0}, \dots, r_{l})$,
this provides a set of coordinates $(\xi_{(0)}, \dots, \xi_{(l)}) \in
\bS^{k_{0}} \times \dots \times \bS^{k_{l}}$.

\begin{definition}
  \label{def:quasi-homogeneous}
  Let $k = (k_0, \dots, k_l) \in \Z_+^{l+1}$ be a partition of $n+1$
  and let $p,q \in \Z_+^{n+1}$ be such that
  \[
  p \cdot q = p_0q_0 + \dots + p_nq_n = 0, \quad |p| = |q|.
  \]
  With the above notation, the $k$-quasi-homogeneous symbol associated
  to $p,q$ is the function $\varphi : \PC \rightarrow \C$ given by
  \[
  \varphi([w]) = \xi^p\overline{\xi}^q = \prod_{j=0}^l
  \left(\frac{w_{(j)}}{|w_{(j)}|}\right)^{p_{(j)}}
  \left(\frac{\overline{w}_{(j)}}{|w_{(j)}|}\right)^{q_{(j)}} =
  \prod_{j=0}^l \left(\frac{w_{(j)}}{r_{j}}\right)^{p_{(j)}}
  \left(\frac{\overline{w}_{(j)}}{r_{j}}\right)^{q_{(j)}}.
  \]
  We will denote by $\mH_k$ the set of $k$-quasi-homogeneous symbols
  on $\PC$.
\end{definition}

It is a simple exercise to prove that the condition $|p| = |q|$
implies that the function $\varphi$ from
Definition~\ref{def:quasi-homogeneous} is well defined, i.e.~its
expression is independent of the choice of homogeneous
coordinates. Furthermore, for the canonical embedding $\C^n
\hookrightarrow \PC$ the symbols from
Definition~\ref{def:quasi-homogeneous} have as a particular case the
symbols considered in \cite{NikolaiQuasi}. The latter is the content
of the following easy to prove result.

\begin{lemma}
  \label{lem:quasi-homog-PC-Unit}
  Let $k \in \Z_+^{l+1}$ be a partition of $n+1$ of the form $k = (k_0
  = 1, k_1, \dots, k_l)$, so that $k' = (k_1, \dots, k_l)$ is a
  partition of $n$. If $p,q \in \Z_+^{n+1}$ satisfy $p\cdot q = 0$ and
  $|p| = |q|$, then with respect to the canonical embedding $\C^n
  \hookrightarrow \PC$ the $k$-quasi-homogeneous symbol $\varphi \in
  \mH_k$ associated to $p,q$ restricted to $\C^n$ satisfies
  \[
  \varphi([1,z]) = \prod_{j=1}^l
  \left(\frac{z_{(j)}}{|z_{(j)}|}\right)^{p_{(j)}}
  \left(\frac{\overline{z}_{(j)}}{|z_{(j)}|}\right)^{q_{(j)}}.
  \]
  In particular, $\varphi$ restricted to $\B^n \subset \C^n \subset
  \PC$ is a $k'$-quasi-homogeneous symbol in the sense of
  \cite{NikolaiQuasi}.
\end{lemma}

As a consequence, the quasi-homogeneous symbols on $\PC$ from
Definition~\ref{def:quasi-homogeneous} correspond to those considered
in \cite{NikolaiQuasi} for the unit ball.

\begin{definition}
  \label{def:quasi-radial}
  Let $k = (k_0, \dots, k_l) \in \Z_+^{l+1}$ be a partition of
  $n+1$. With the above notation, a $k$-quasi-radial symbol is a
  function function $a : \PC \rightarrow \C$ that can be written in
  the form $a([w]) = \widetilde{a}(|w_{(0)}|, \dots, |w_{(l)}|)$ for
  some function $\widetilde{a} : [0,+\infty)^{l+1} \rightarrow \C$
  which is homogeneous of degree $0$. We will denote by $\mR_k$ the
  set of $k$-quasi-radial symbols on $\PC$.
\end{definition}

Note that the degree $0$ homogeneity condition ensures that such
quasi-radial symbols are well defined. Also, the following obvious
result shows that suitable quasi-radial symbols restrict to those
defined in \cite{NikolaiQuasi}.

\begin{lemma}
  \label{lem:quasi-radial-PC-unit}
  Let $k \in \Z_+^{l+1}$ be a partition of $n+1$ of the form $k = (k_0
  = 1, k_1, \dots, k_l)$, so that $k' = (k_1, \dots, k_l)$ is a
  partition of $n$. Then, with respect to the canonical embedding
  $\C^n \hookrightarrow \PC$, every $k$-quasi-radial symbol $a \in
  \mR_k$ restricted to $\C^n$ defines a function $\C^n \rightarrow \C$
  that depends only $|z_{(1)}|, \dots, |z_{(l)}|$, where $z \in
  \C^n$. In particular, the symbol $a$ restricted to $\B^n \subset
  \C^n \subset \PC$ is a $k'$-quasi-radial symbol in the sense of
  \cite{NikolaiQuasi}.
\end{lemma}

By putting together both definitions above, we obtain the notion of
quasi-homogeneous quasi-radial symbol.

\begin{definition}
  \label{def:quasi-homog-radial}
  Let $k = (k_0, \dots, k_l) \in \Z_+^{l+1}$ be a partition of
  $n+1$. Then, a $k$-quasi-homogeneous quasi-radial symbol is a
  function $\PC \rightarrow \C$ of the form $a\varphi$, where $a \in
  \mR_k$ and $\varphi \in \mH_k$; in this case, we will refer to $a$
  and $\varphi$ as the quasi-radial and quasi-homogeneous parts of the
  symbol, respectively. We will denote by $\mHR_k$ the set of
  $k$-quasi-homogeneous quasi-radial symbols.
\end{definition}

As a consequence of Lemmas~\ref{lem:quasi-homog-PC-Unit} and
\ref{lem:quasi-radial-PC-unit}, for a partition of $n+1$ of the form
$k = (k_0 = 1, k_1, \dots, k_l)$ the symbols in $\mHR_k$ restrict to
functions on $\B^n \subset \C^n \subset \PC$ that are
quasi-homogeneous quasi-radial symbols in the sense of
\cite{NikolaiQuasi}, with the latter corresponding to the partition
$k' = (k_1, \dots, k_l)$ of $n$. On the other hand, it turns out that
the condition $k_0 = 1$ is not very restrictive. In fact, the
following result shows that assuming $k_0 = 1$ already provides the
more general notion of quasi-homogeneous quasi-radial symbol.

\begin{lemma}
  \label{lem:k0=1}
  Let $k = (k_0, \dots, k_l) \in \Z_+^{l+1}$ be a partition of $n+1$
  and assume that $k_0 > 1$. If we consider the partition
  $\widetilde{k} = (1, k_0-1, k_1, \dots, k_l)$, then we have $\mHR_k
  \subset \mHR_{\widetilde{k}}$. In particular, every element of
  $\mHR_k$ when restricted to $\B^n \subset \C^n \subset \PC$ is a
  quasi-homogeneous quasi-radial symbol in the sense of
  \cite{NikolaiQuasi}.
\end{lemma}
\begin{proof}
  For every element $w \in \C^{k_0}$ we will write $w = (w_1, w')$
  where $w_1 \in \C$ and $w' \in \C^{k_0-1}$. In particular, for every
  $w \in \C^{k_0}$ we have $|w| = \sqrt{|w_1|^2 + |w'|^2}$, which
  clearly implies the inclusion $\mR_k \subset \mR_{\widetilde{k}}$.

  On the other hand, for $w \in \C^{k_0}$ and $p \in \Z_+^{k_0}$ we
  have the identity
  \[
  \left(\frac{w}{|w|}\right)^p =
  \frac{|w_1|^{p_1}|w'|^{|p'|}}{(|w_1|^2 + |w'|^2)^{\frac{|p|}{2}}}
  \left(\frac{w_1}{|w_1|}\right)^{p_1}
  \left(\frac{w'}{|w'|}\right)^{p'}.
  \]
  We observe that the first fraction in the right-hand side of the
  identity is quasi-radial with respect to the partition
  $(1,k_0-1)$. Hence, it is easy to see that the last identity implies
  the inclusion $\mH_k \subset \mHR_{\widetilde{k}}$. Since
  $\mHR_{\widetilde{k}}$ is clearly closed under multiplication of
  functions, the result follows from these remarks.
\end{proof}

As a consequence of this result, without loss of generality in the
study of quasi-homogeneous quasi-radial symbols we will assume that
every partition of the homogeneous coordinates of $\PC$ is of the form
$(1, k_1, \dots, k_l)$, where $k = (k_1, \dots, k_l) \in \Z_+^l$ is
thus a partition of $n$. In other words, it is enough to study the set
of symbols $\mHR_{(1,k)}$ where $k \in \Z_+^l$ is a partition of $n$.
Furthermore, once we have such assumption,
Lemmas~\ref{lem:quasi-homog-PC-Unit} and
\ref{lem:quasi-radial-PC-unit} provide the expression for the symbols
on $\C^n \subset \PC$.

Also, since the local chart given by the canonical embedding $\C^n
\hookrightarrow \PC$ covers a conull subset of $\PC$, every
computation involving integrals can be performed on $\C^n$. We will
make use of such simplification in the rest of this work.

\begin{remark}
  \label{rmk:Cn}
  Let $k = (k_1, \dots, k_l) \in \Z_+^l$ be a partition of $n$. In the
  rest of this work, if a symbol $\xi^p \overline{\xi}^q$ in
  $\mH_{(1,k)}$ is considered as a function $\C^n \rightarrow \C$,
  then we will assume that it is an expression of the form
    \[
    \xi^p \overline{\xi}^q =
    \prod_{j=1}^l
    \left(\frac{z_{(j)}}{|z_{(j)}|}\right)^{p_{(j)}}
    \left(\frac{\overline{z}_{(j)}}{|z_{(j)}|}\right)^{q_{(j)}},
    \]
    where $\xi \in \bS^{k_1} \times \dots \bS^{k_l}$ is given by
    $\xi_{(j)} = \frac{z_{(j)}}{|z_{(j)}|}$, for $z = (z_{(1)}, \dots,
    z_{(l)}) \in \C^n$, and $p,q \in \Z_+^n$. In particular, such a
    symbol $\xi^p \overline{\xi}^q$ satisfies
    Definition~\ref{def:quasi-homogeneous} for the exponents $(0,p),
    (0,q) \in \Z_+^{n+1}$ and the homogeneous coordinates of $\PC$.
\end{remark}

Observe that for every partition $k \in \Z_+^l$ of $n$ the family
$\mR_{(1,k)}$ is contained in $\mR_{(1,...,1)}$. Hence, the Toeplitz
operators $T_a$ for symbols $a\in \mR_{(1,k)}$ can be simultaneously
diagonalized with respect to the monomial basis in the corresponding
Bergman space (see \cite{QV-Ball1} and \cite{QS-Proj}). Furthermore,
the following result provides the multiplication operator so obtained.

\begin{lemma}\label{Lemma-kradial}
  Let $k = (k_1, \dots, k_l) \in \Z_+^l$ be a partition of $n$, and
  let $a \in \mR_{(1,k)}$ be a $(1,k)$-quasi-radial bounded measurable
  symbol considered as a function $\C^n \rightarrow \C$. Let $T_a$ be
  the Toeplitz operator defined by $a$ on $\mA^2_m(\PC) \simeq
  P_m(\C^n)$ (identified by
  Proposition~\ref{prop:sections_polys}). Then $T_a
  z^{\alpha}=\gamma_{a,k,m}(\alpha)z^{\alpha}$, for every $\alpha\in
  J_n(m)$, where
  \begin{align}\label{spectrum-radial}
    \nonumber
    \gamma_{a,k,m}(\alpha)&=\gamma_{a,k,m}(|\alpha_{(1)}|,\dots,
    |\alpha_{(l)}|) \\
    &=\frac{2^l  (n+m)!}{(m-|\alpha|)!
      \prod_{j=1}^{l}(k_j-1+|\alpha_{(j)}|)} \\
    \nonumber
    &\times \int_{\R^n_+}  a(r_1,\dots,r_l)
    (1+r^2)^{-(n+m+1)}\prod_{j=1}^{l}r_j^{2|\alpha_{(j)}|+2k_j-1} dr_j.
  \end{align}
\end{lemma}
\begin{proof}
  Let $\alpha\in \Z^n_+$ with $|\alpha| \leq m$. Then, we have
  \begin{align*}
    \langle T_a z^{\alpha}, z^{\alpha}\rangle_m
    &=\langle a z^{\alpha}, z^{\alpha}\rangle_m \\
    &=\frac{(n+m)!}{\pi^n m!} \int_{\C^n}
    \frac{a(r_1,\dots,r_l) z^{\alpha}\overline{z}^\alpha
      dV(z)}{(1+|z_1|^2 + \dots + |z_n|^2)^{n+m+1}}.
  \end{align*}
  Making the substitution $z_{(j)} = r_j \xi_{(j)}$, where $r_j \in
  [0, \infty)$ and $\xi_{(j)} \in \bS^{k_j}$, for $j =
  1,\dots,l$, we obtain
  \begin{align*}
    \langle a z^\alpha, z^\alpha\rangle_{m}
    &=\frac{ (n+m)!}{\pi^n m!} \int_{\R^n_+} a(r_1,\dots,r_l)
    (1+r^2)^{-(n+m+1)} \prod_{j=1}^{l}r_j^{2|\alpha_{(j)}|+2k_j-1} dr_j \\
    &\times \prod_{j=1}^{l} \int_{\bS^{k_j}}
    \xi^{\alpha_{(j)}}_{(j)}\overline{\xi}^{\alpha_{(j)}}_{(j)}
    dS(\xi_{(j)})  \\
    &=\frac{2^l \alpha! (n+m)!}{ m! \prod_{j=1}^{l}(k_j-1+|\alpha_{(j)}|)!}\\
    &\times \int_{\R^n_+}  a(r_1,\dots,r_l) (1+r^2)^{-(n+m+1)}
    \prod_{j=1}^{l}r_j^{2|\alpha_{(j)}|+2k_j-1} dr_j
  \end{align*}
  and the result follows from (\ref{int-sphere})
\end{proof}

We now find the action of the Toeplitz operators with
quasi-homogeneous symbols on the canonical monomial basis. Note that
the following result corresponds to Lemma~3.3 from
\cite{NikolaiQuasi}.

\begin{lemma}\label{Lemma-khomogeneous}
  Let $k = (k_1, \dots, k_l) \in \Z_+^l$ be a partition of $n$, and
  let $p,q \in \Z_+^n$ be such that $p\cdot q = 0$ and $|p| =
  |q|$. Consider a bounded measurable $(1,k)$-quasi-homogeneous
  quasi-radial symbol that as a function $\C^n \rightarrow \C$ is
  written as $a\xi^p\overline{\xi}^q = a(r_1, \dots,
  r_l)\xi^p\overline{\xi}^q$.  Then, the Toeplitz operator
  $T_{a\xi^p\overline{\xi}^q}$ acts on monomials $z^{\alpha}$ with
  $\alpha\in \Z^n_+$ and $|\alpha|\leq m$ as follows
  \[
  T_{a\xi^p\overline{\xi}^q} z^{\alpha}=
  \begin{cases}
    \tilde{\gamma}_{a,k,p,q,m}(\alpha)z^{\alpha+p-q} &
    \text{ for } \alpha + p - q \in J_n(m) \\
    0 & \text{ for } \alpha + p - q \not\in J_n(m)
  \end{cases}
  \]
  where
  \begin{align}\label{spectrum-homoge}
    \tilde{\gamma}_{a,k,p,q,m}(\alpha) &=
     \frac{2^l (\alpha+p)! (n+m)!}{(\alpha+p-q)! (m-|\alpha+p-q|)!
      \prod_{j=1}^{l}(k_j-1+|\alpha_{(j)}+p_{(j)}|)!} \nonumber \\
    & \times \int_{\R^n_+}  a(r_1,\dots,r_l)(1+r^2)^{-(n+m+1)}
    \prod_{j=1}^{l}r_j^{2|\alpha_{(j)}+p_{(j)}-q_{(j)}|+2k_j-1} dr_j
  \end{align}
\end{lemma}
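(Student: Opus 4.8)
The plan is to determine $T_{a\xi^p\overline{\xi}^q}z^\alpha$ through its matrix coefficients against the orthogonal monomial basis $\{z^\beta : \beta\in J_n(m)\}$ of $\mA^2_m(\PC)$, exactly mirroring the proof of Lemma~\ref{Lemma-kradial}. First I would exploit that $B_m$ is the orthogonal projection onto the Bergman space, so it is self-adjoint and fixes each $z^\beta$; hence for every $\beta\in J_n(m)$ one has $\langle T_{a\xi^p\overline{\xi}^q}z^\alpha, z^\beta\rangle_m = \langle B_m(a\xi^p\overline{\xi}^q z^\alpha), z^\beta\rangle_m = \langle a\xi^p\overline{\xi}^q z^\alpha, z^\beta\rangle_m$. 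This reduces the computation to a plain weighted $L_2$ inner product, from which the coefficient of $z^\beta$ in $T_{a\xi^p\overline{\xi}^q}z^\alpha$ is read off after dividing by $\|z^\beta\|_m^2$.

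Next I would insert the explicit measure $d\nu_m$ and perform the substitution $z_{(j)} = r_j\xi_{(j)}$ with $r_j\in[0,\infty)$ and $\xi_{(j)}\in\bS^{k_j}$, as in Lemma~\ref{Lemma-kradial}. Writing $z^\alpha = \prod_{j=1}^l r_j^{|\alpha_{(j)}|}\xi_{(j)}^{\alpha_{(j)}}$ and $\overline{z}^\beta = \prod_{j=1}^l r_j^{|\beta_{(j)}|}\overline{\xi}_{(j)}^{\beta_{(j)}}$, together with $\xi^p = \prod_j \xi_{(j)}^{p_{(j)}}$ and $\overline{\xi}^q = \prod_j \overline{\xi}_{(j)}^{q_{(j)}}$, the integrand factors into a radial part and a product over $j$ of spherical integrals $\int_{\bS^{k_j}} \xi_{(j)}^{\alpha_{(j)}+p_{(j)}}\overline{\xi}_{(j)}^{\beta_{(j)}+q_{(j)}}\,dS(\xi_{(j)})$.

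The decisive step is the sphere identity (\ref{int-sphere}): each spherical factor vanishes unless $\alpha_{(j)}+p_{(j)} = \beta_{(j)}+q_{(j)}$, so the entire coefficient vanishes unless $\beta = \alpha+p-q$. This singles out the unique surviving basis vector $z^{\alpha+p-q}$ and immediately yields the zero case when $\alpha+p-q\notin J_n(m)$. Here the hypothesis $p\cdot q = 0$, i.e.\ that $p$ and $q$ have disjoint supports, is what lets me rewrite $|\beta_{(j)}| = |\alpha_{(j)}+p_{(j)}-q_{(j)}|$ in the radial exponents and guarantees that $\alpha+p-q$ is a genuine nonnegative multi-index whenever it lies in $J_n(m)$.

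Finally I would assemble the constants. On the surviving term the spherical identity contributes the factors $2\pi^{k_j}(\alpha_{(j)}+p_{(j)})!/(k_j-1+|\alpha_{(j)}+p_{(j)}|)!$, and these combine with the prefactor $(n+m)!/(\pi^n m!)$ and the remaining radial integral $\int_{\R^n_+} a(r_1,\dots,r_l)(1+r^2)^{-(n+m+1)}\prod_j r_j^{2|\alpha_{(j)}+p_{(j)}-q_{(j)}|+2k_j-1}\,dr_j$. Dividing by $\|z^{\alpha+p-q}\|_m^2 = (\alpha+p-q)!\,(m-|\alpha+p-q|)!/m!$, computed from the orthonormalization (\ref{eq:orth-basis}), extracts the coefficient $\tilde\gamma_{a,k,p,q,m}(\alpha)$. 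The main obstacle is entirely the factorial bookkeeping: tracking the powers of two, cancelling the $\pi^{k_j}$ against $\pi^n$, and collapsing the product of factorials so as to produce the ratio $(\alpha+p)!/(\alpha+p-q)!$ appearing in (\ref{spectrum-homoge}).
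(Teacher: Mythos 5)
Your proposal is correct and follows essentially the same route as the paper's proof: compute $\langle a\xi^p\overline{\xi}^q z^\alpha, z^\beta\rangle_m$, change to quasi-radial/spherical coordinates, apply the sphere identity \eqref{int-sphere} to kill every term except $\beta = \alpha+p-q$, and read off the coefficient from the orthogonality of the monomial basis. The only cosmetic difference is that you make explicit the reduction $\langle T_\varphi z^\alpha, z^\beta\rangle_m = \langle \varphi z^\alpha, z^\beta\rangle_m$ via self-adjointness of $B_m$ and the division by $\|z^{\alpha+p-q}\|_m^2$, steps the paper leaves implicit.
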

\begin{proof}
  Let $\alpha,\beta\in \Z^n_+$ satisfy $|\alpha|, |\beta| \leq
  m$. Then, we have
  \begin{align*}
    \langle T_{a\xi^p\overline{\xi}^q} z^{\alpha},
    z^{\beta}\rangle_{m}&=\langle a\xi^p\overline{\xi}^q z^{\alpha},
    z^{\beta}\rangle_{m}\\
    &=\frac{ (n+m)!}{\pi^n m!} \int_{\C^n}
    \frac{a(r_1,\dots,r_l) \xi^p\overline{\xi}^q
      z^{\alpha}\overline{z}^{\beta} dV(z)}{(1+|z_1|^2 + \dots +
      |z_n|^2)^{n+m+1}}.
  \end{align*}

  Applying the change of the variables $z_{(j)} = r_j \xi_{(j)}$,
  where $r_j \in [0, \infty)$ and $\xi_{(j)} \in \bS^{k_j}$, for $j =
  1,\dots, l$, this yields
  \begin{align}
    \label{Toeplitz-qh}
    \nonumber
    \langle a\xi^p\overline{\xi}^qz^{\alpha}, z^{\beta}\rangle_{m}
    &=\frac{(n+m)!}{\pi^n m!}
    \int_{\R^n_+} a(r_1,\dots,r_l)(1+r^2)^{-(n+m+1)} \\
    &\times
    \nonumber
    \prod_{j=1}^{l}r_j^{|\alpha_{(j)}|+|\beta_{(j)}|+2k_j-1} dr_j  \\
    \nonumber
    &\times \prod_{j=1}^{l} \int_{\bS^{k_j}}
    \xi^{\alpha_{(j)}+p_{(j)}}_{(j)}\overline{\xi}^{\beta_{(j)}+q_{(j)}}_{(j)}
    dS(\xi_{(j)}) \\
    &=\delta_{\alpha+p,\beta+q}\frac{2^l(\alpha+p)! (n+m)!}{ m!
      \prod_{j=1}^{l}(k_j-1+|\alpha_{(j)}+p_{(j)}|)!}\\
    \nonumber
    &\times \int_{\R^n_+} a(r_1,\dots,r_l)(1+r^2)^{-(n+m+1)} \\
    \nonumber
    &\times
    \prod_{j=1}^{l}r_j^{2|\alpha_{(j)}+p_{(j)}-q_{(j)}|+2k_j-1} dr_j
  \end{align}
  Observe that this expression is non zero if and only if $\beta =
  \alpha + p - q$, which a priori belongs to $J_n(m)$.  We conclude
  the result from the orthonormality of the basis defined in
  \eqref{eq:orth-basis}.
\end{proof}

\section{COMMUTATIVITY RESULTS FOR QUASI-HOMOGENEOUS SYMBOLS ON
  $\PC$} \label{sec:commutativity}
The results in this section show that the commuting identities proved
in \cite{NikolaiQuasi} for the unit ball $\B^n$ have corresponding
ones for the complex projective space $\PC$.

\begin{theorem}
  \label{thm:commutativity-(1,k)}
  Let $k = (k_1, \dots, k_l) \in \Z_+^l$ be a partition of $n$ and $p,
  q \in \Z_+^n$ a pair of orthogonal multi-indices.  Let $a_1, a_2 \in
  \mR_{(1,k)}$ be non identically zero and let $\xi^p \overline{\xi}^q
  \in \mH_{(1,k)}$. Then the Toeplitz operators $T_{a_1}$ and $T_{a_2
    \xi^p \overline{\xi}^q}$ commute on each weighted Bergman space
  $\mA^2_{m}(\PC)$ if and only if $|p_{(j)} | = |q_{(j)} |$ for each
  $j=1,\dots, l$.
\end{theorem}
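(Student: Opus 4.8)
The plan is to reduce the commutativity question to a direct comparison of the diagonal and shift actions computed in Lemmas~\ref{Lemma-kradial} and~\ref{Lemma-khomogeneous}. Since $T_{a_1}$ acts diagonally on the monomial basis by the eigenvalue $\gamma_{a_1,k,m}(\alpha)$ depending only on $(|\alpha_{(1)}|,\dots,|\alpha_{(l)}|)$, while $T_{a_2\xi^p\overline{\xi}^q}$ sends $z^\alpha$ to a multiple of $z^{\alpha+p-q}$, I would compute both compositions $T_{a_1}T_{a_2\xi^p\overline{\xi}^q}$ and $T_{a_2\xi^p\overline{\xi}^q}T_{a_1}$ on an arbitrary basis element $z^\alpha$ with $\alpha+p-q\in J_n(m)$. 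The first composition yields $\gamma_{a_1,k,m}(\alpha+p-q)\,\tilde{\gamma}_{a_2,k,p,q,m}(\alpha)\,z^{\alpha+p-q}$, and the second yields $\gamma_{a_1,k,m}(\alpha)\,\tilde{\gamma}_{a_2,k,p,q,m}(\alpha)\,z^{\alpha+p-q}$. Thus, on each basis vector for which the shift does not annihilate it, the two operators agree precisely when
\[
\gamma_{a_1,k,m}(\alpha+p-q) = \gamma_{a_1,k,m}(\alpha)
\]
whenever $\tilde{\gamma}_{a_2,k,p,q,m}(\alpha)\neq 0$.

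Next I would analyze this equation in terms of the quasi-radial quantities. Because $\gamma_{a_1,k,m}$ depends on $\alpha$ only through the block sums $|\alpha_{(j)}|$, the condition becomes a statement about whether the shift $p-q$ changes these block sums: we have $|(\alpha+p-q)_{(j)}| = |\alpha_{(j)}| + |p_{(j)}| - |q_{(j)}|$ for each $j$. Hence if $|p_{(j)}| = |q_{(j)}|$ for all $j$, the block sums are preserved by the shift, so $\gamma_{a_1,k,m}(\alpha+p-q) = \gamma_{a_1,k,m}(\alpha)$ identically and the operators commute. This establishes the sufficiency direction cleanly and for all weights $m$ at once.

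For the converse I would argue by contrapositive: assuming $|p_{(j_0)}| \neq |q_{(j_0)}|$ for some block $j_0$, I must exhibit a weight $m$ and a multi-index $\alpha$ with $\alpha+p-q\in J_n(m)$, with $\tilde{\gamma}_{a_2,k,p,q,m}(\alpha)\neq 0$, and with $\gamma_{a_1,k,m}(\alpha+p-q)\neq\gamma_{a_1,k,m}(\alpha)$. The main obstacle is the non-vanishing control: I need to ensure that the integral factor in $\tilde{\gamma}_{a_2,k,p,q,m}(\alpha)$ is nonzero, and that $\gamma_{a_1,k,m}$ genuinely separates two different tuples of block sums. For the former I would use that $a_2$ is a non identically zero quasi-radial symbol together with a positivity or Stieltjes-moment argument on the radial integral, so that for suitable $\alpha$ the radial moment is nonzero. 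For the latter I would choose $a_1$'s non-triviality to produce at least one pair of tuples on which $\gamma_{a_1,k,m}$ takes distinct values; since $\gamma_{a_1,k,m}$ is, up to explicit positive combinatorial factors, a moment functional of $a_1$ against varying radial weights $\prod_j r_j^{2|\alpha_{(j)}|+2k_j-1}$, two tuples differing in some block sum generically give different moments, and non-triviality of $a_1$ forces this for some choice.

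The step I expect to be the hardest is this last non-degeneracy argument, namely showing that a non identically zero quasi-radial $a_1$ cannot accidentally yield the same eigenvalue on two block-sum tuples related by the shift $p-q$ for \emph{every} admissible $\alpha$ and \emph{every} weight $m$. I would handle it by exploiting the freedom in $m$: as $m$ grows the family of radial weights $(1+r^2)^{-(n+m+1)}\prod_j r_j^{2|\alpha_{(j)}|+2k_j-1}$ becomes rich enough that equality of all the relevant moments would force $a_1$ to vanish as a quasi-radial symbol, contradicting the hypothesis. This moment-rigidity is the technical heart; the rest is bookkeeping with the explicit formulas \eqref{spectrum-radial} and \eqref{spectrum-homoge}.
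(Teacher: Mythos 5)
Your reduction of the commutator to the scalar comparison
$\gamma_{a_1,k,m}(\alpha+p-q)\,\tilde{\gamma}_{a_2,k,p,q,m}(\alpha)$ versus
$\gamma_{a_1,k,m}(\alpha)\,\tilde{\gamma}_{a_2,k,p,q,m}(\alpha)$, and your sufficiency argument (the shift $p-q$ preserves every block sum $|\alpha_{(j)}|$ exactly when $|p_{(j)}|=|q_{(j)}|$, and $\gamma_{a_1,k,m}$ depends on $\alpha$ only through those block sums) are exactly the paper's computation. The gap is in your converse. The ``moment-rigidity'' you rely on --- that $\gamma_{a_1,k,m}(\alpha+p-q)=\gamma_{a_1,k,m}(\alpha)$ for all admissible $\alpha$ and all weights $m$ forces $a_1$ to vanish --- is false. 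Take $a_1\equiv 1$: since each $\nu_m$ is a probability measure, $T_{a_1}$ is the identity on every $\mA^2_m(\PC)$, i.e.\ $\gamma_{1,k,m}(\alpha)=1$ for all $\alpha$ and $m$, so every one of your moment equalities holds while $a_1$ is certainly not identically zero, and $T_{a_1}$ commutes with $T_{a_2\xi^p\overline{\xi}^q}$ regardless of $p$ and $q$. Hence no contradiction with ``$a_1$ not identically zero'' can be extracted, and the contrapositive cannot be closed as you have set it up.

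To be fair, you have put your finger on a real soft spot: the paper's own proof writes out the two compositions and simply declares that they coincide if and only if $|p_{(j)}|=|q_{(j)}|$ for all $j$, without addressing either the possible vanishing of the $a_2$-integral or the possible coincidence of the $a_1$-moments; your insistence on a non-degeneracy step is the right instinct, and your treatment of the $a_2$ factor (some radial moment of a nonzero bounded $a_2$ must be nonzero, by a moment/Stone--Weierstrass argument as $m$ and $\alpha$ vary) is sound in principle. But the ``only if'' direction for a \emph{fixed} non identically zero $a_1$ is defeated by the constant example above, so it can only be established under a stronger reading --- e.g.\ requiring commutation for \emph{every} quasi-radial $a_1$, in which case one separates the two block-sum tuples by choosing $a_1$ to be the characteristic function of a small box in the $(r_1,\dots,r_l)$-coordinates, as in the usage of the analogous statement in \cite{NikolaiQuasi}. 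As written, the hardest step of your plan is not merely hard; it is unprovable.
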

\begin{proof}
  Let $\alpha\in J_n(m)$ be given. First note that if $\alpha + p - q
  \not\in J_n(m)$, then the Lemmas~\ref{Lemma-kradial} and
  \ref{Lemma-khomogeneous} imply that both
  $T_{a_1}T_{a_2\xi^p\overline{\xi}^q} z^{\alpha}$ and
  $T_{a_2\xi^p\overline{\xi}^q} T_{a_1} z^{\alpha}$ vanish. Hence, we
  can assume that $\alpha+p-q \in J_n(m)$.  Applying again
  Lemmas~\ref{Lemma-kradial} and \ref{Lemma-khomogeneous} we obtain
  \begin{align*}
    T_{a_1}T_{a_2\xi^p\overline{\xi}^q} z^{\alpha}
    &=\frac{2^l (\alpha+p)!
      (n+m)!}{(\alpha+p-q)! (m-|\alpha+p-q|)!
      \prod_{j=1}^{l}(k_j-1+|\alpha_{(j)}+p_{(j)}|)!} \\
    &\times \int_{\R^n_+}  a_2(r_1,\dots,r_l)(1+r^2)^{-(n+m+1)}
    \prod_{j=1}^{l}r_j^{2|\alpha_{(j)}+p_{(j)}-q_{(j)}|+2k_j-1} dr_j \\
    &\times \frac{2^l(n+m)!}{(m-|\alpha+p-q|)!
      \prod_{j=1}^{l}(k_j-1+|\alpha_{(j)}+p_{(j)}-q_{(j)}|)!}\\
    &\times \int_{\R^n_+}  a_1(r_1,\dots,r_l) (1+r^2)^{-(n+m+1)}
    \prod_{j=1}^{l}r_j^{2|\alpha_{(j)}+p_{(j)}-q_{(j)}|+2k_j-1} dr_j \\
    &\times  z^{\alpha+p-q}.
  \end{align*}
  And similarly, we have
  \begin{align*}
    T_{a_2\xi^p\overline{\xi}^q}  T_{a_1} z^{\alpha}
    &=\frac{2^l(n+m)!}{(m-|\alpha|)!
      \prod_{j=1}^{l}(k_j-1+|\alpha_{(j)}|)!}\\
    &\times  \int_{\R^n_+}  a_1(r_1,\dots,r_l)
    (1+r^2)^{-(n+m+1)}\prod_{j=1}^{l}r_j^{2|\alpha_{(j)}|+2k_j-1} dr_j\\
    &\times\frac{2^l (\alpha+p)! (n+m)!}{(\alpha+p-q)! (m-|\alpha+p-q|)!
      \prod_{j=1}^{l}(k_j-1+|\alpha_{(j)}+p_{(j)}|)!} \\
    &\times \int_{\R^n_+}  a_2(r_1,\dots,r_l)(1+r^2)^{-(n+m+1)}
    \prod_{j=1}^{l}r_j^{2|\alpha_{(j)}+p_{(j)}-q_{(j)}|+2k_j-1} dr_j \\
    &\times  z^{\alpha+p-q}
  \end{align*}
  From which we conclude that $T_{a_1}T_{a_2\xi^p\overline{\xi}^q}
  z^\alpha = T_{a_2\xi^p\overline{\xi}^q} T_{a_1} z^\alpha$ if and
  only if $|p_{(j)}|=|q_{(j)}|$ where $j=1,\dots, l$.
\end{proof}

If we assume that $|p_{(j)}|=|q_{(j)}|$ for all $j = 1, \dots, l$,
then the equations (\ref{spectrum-radial}) and (\ref{spectrum-homoge})
combine together to yield the following identity.
\begin{align}\label{spectrum-homoge-2}
  \nonumber
  \tilde{\gamma}_{a,k,p,q,m}(\alpha)  &= \frac{2^l (\alpha+p)!
    (n+m)!}{(\alpha+p-q)! (m-|\alpha+p-q|)!
    \prod_{j=1}^{l}(k_j-1+|\alpha_{(j)}+p_{(j)}|)!} \\
  \nonumber
  &\times \int_{\R^n_+}  a(r_1,\dots,r_l)(1+r^2)^{-(n+m+1)}
  \prod_{j=1}^{l}r_j^{2|\alpha_{(j)}|+2k_j-1} dr_j \\
  \nonumber
  &= \frac{ (\alpha+p)!
    \prod_{j=1}^{l}(k_j-1+|\alpha_{(j)}|)!}{(\alpha+p-q)!
    \prod_{j=1}^{l}(k_j-1+|\alpha_{(j)}+p_{(j)}|)!}
  \gamma_{a,k,m}(\alpha) \\
  &= \prod_{j=1}^{l}\left(\frac{ (\alpha_{(j)}+p_{(j)})!
      (k_j-1+|\alpha_{(j)}|)!}{(\alpha_{(j)}+p_{(j)}-q_{(j)})!
      (k_j-1+|\alpha_{(j)}+p_{(j)}|)!}\right)
  \gamma_{a,k,m}(\alpha)
\end{align}

As a consequence of the previous computations, we also obtain the
following very special property of Toeplitz operators with
quasi-homogeneous symbols.

\begin{corollary}\label{cor:T-quasihomog}
  Let $k = (k_1, \dots, k_l) \in \Z_+^l$ be a partition of $n$ and $p,
  q \in \Z_+^n$ a pair of orthogonal multi-indices such that $|p_{(j)}|
  = |q_{(j)}|$ for all $j = 1, \dots, l$. Then for each function $a
  \in \mR_{(1,k)}$, we have $T_{a}T_{ \xi^p \overline{\xi}^q}=T_{
    \xi^p \overline{\xi}^q}T_{a}=T_{ a\xi^p \overline{\xi}^q}.$
\end{corollary}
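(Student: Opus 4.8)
The plan is to derive the corollary directly from the diagonal and shift formulas established in Lemmas~\ref{Lemma-kradial} and \ref{Lemma-khomogeneous}, together with the combined identity \eqref{spectrum-homoge-2}. Since all three operators $T_a$, $T_{\xi^p\overline{\xi}^q}$, and $T_{a\xi^p\overline{\xi}^q}$ are determined by their action on the monomial basis $\{z^\alpha : \alpha \in J_n(m)\}$ of $\mA^2_m(\PC)$, it suffices to check the claimed operator identities on each basis element $z^\alpha$. First I would fix $\alpha \in J_n(m)$ and treat the degenerate case $\alpha + p - q \notin J_n(m)$ separately: here Lemma~\ref{Lemma-khomogeneous} gives $T_{\xi^p\overline{\xi}^q}z^\alpha = 0$ and also $T_{a\xi^p\overline{\xi}^q}z^\alpha = 0$, while $T_a z^\alpha = \gamma_{a,k,m}(\alpha) z^\alpha$ stays proportional to $z^\alpha$, so that $T_{\xi^p\overline{\xi}^q}T_a z^\alpha = 0$ as well; thus all the expressions vanish and the identities hold trivially.

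For the main case $\alpha + p - q \in J_n(m)$, I would compute each composition explicitly. For $T_a T_{\xi^p\overline{\xi}^q}$, apply Lemma~\ref{Lemma-khomogeneous} with the constant symbol $a \equiv 1$ to get $T_{\xi^p\overline{\xi}^q}z^\alpha = \tilde{\gamma}_{1,k,p,q,m}(\alpha)\, z^{\alpha+p-q}$, and then apply $T_a$, which acts diagonally by Lemma~\ref{Lemma-kradial} with eigenvalue $\gamma_{a,k,m}(\alpha+p-q)$ on $z^{\alpha+p-q}$. For $T_{\xi^p\overline{\xi}^q}T_a$, apply $T_a$ first (eigenvalue $\gamma_{a,k,m}(\alpha)$) and then the shift $T_{\xi^p\overline{\xi}^q}$. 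For $T_{a\xi^p\overline{\xi}^q}$ directly, use $\tilde{\gamma}_{a,k,p,q,m}(\alpha)$. The key arithmetic step is then to verify that all three scalar coefficients in front of $z^{\alpha+p-q}$ coincide. The hypothesis $|p_{(j)}| = |q_{(j)}|$ is exactly what makes the pure shift coefficient $\tilde{\gamma}_{1,k,p,q,m}(\alpha)$ purely combinatorial (the radial integral of the constant symbol telescopes against the normalizing factors), so that multiplying it by either diagonal eigenvalue reproduces the integral factor of $\gamma_{a,k,m}$ evaluated at the correct index; comparing the combinatorial prefactors against those in \eqref{spectrum-homoge-2} closes the identity.

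The main obstacle I anticipate is purely bookkeeping: one must confirm that the radial integral appearing in $\gamma_{a,k,m}(\alpha+p-q)$ carries the exponents $2|\alpha_{(j)}+p_{(j)}-q_{(j)}|+2k_j-1$, which under $|p_{(j)}|=|q_{(j)}|$ equals $2|\alpha_{(j)}|+2k_j-1$, thereby matching the integral in $\tilde{\gamma}_{a,k,p,q,m}(\alpha)$ as already recorded in \eqref{spectrum-homoge-2}. This ensures the symbol $a$ contributes the \emph{same} integral regardless of whether the shift is applied before or after $T_a$, which is the crux of why the shift and the diagonal operator commute. Once the integrals are seen to agree, the remaining factorial and product prefactors are matched termwise, and the equality $T_a T_{\xi^p\overline{\xi}^q} = T_{\xi^p\overline{\xi}^q}T_a = T_{a\xi^p\overline{\xi}^q}$ follows on every basis vector, hence as operators on $\mA^2_m(\PC)$.
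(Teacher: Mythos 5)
Your proposal is correct and is essentially the argument the paper intends: the paper states the corollary as an immediate consequence of Lemmas~\ref{Lemma-kradial} and \ref{Lemma-khomogeneous} together with identity \eqref{spectrum-homoge-2}, and your basis-vector verification --- including the observations that $\tilde{\gamma}_{1,k,p,q,m}(\alpha)$ is purely combinatorial because $\gamma_{1,k,m}(\alpha)=1$ (i.e.\ $T_1$ is the identity), and that $\gamma_{a,k,m}(\alpha+p-q)=\gamma_{a,k,m}(\alpha)$ under $|p_{(j)}|=|q_{(j)}|$ --- is exactly what fills in those details. No gaps.
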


Consider $k=(k_1,\ldots,k_l)$ and a pair of multi-indices $p,q$ such
that $p\perp q$ and $|p_{(j)}|=|q_{(j)}|$ for $j=1,\ldots l$.  we
define
\[
\tilde{p}_{(j)}=(0,\ldots,0,p_{(j)},0,\ldots,0), \quad
\tilde{q}_{(j)}=(0,\ldots,0,q_{(j)},0,\ldots,0)
\]
where the only possibly non zero part is placed in the $j$-th
position. In particular, we have
$p=\tilde{p}_{(1)}+\ldots+\tilde{p}_{(l)}$ and
$q=\tilde{q}_{(1)}+\ldots+\tilde{q}_{(l)}$.  Now let
$T_j=T_{\xi^{\tilde{p}_{(j)}}\overline{\xi}^{\tilde{p}_{(j)}}}$ for
every $j = 1, \dots, l$. As a consequence of the previous computations
we obtain the following result.

\begin{corollary}
  The Toeplitz operators
  $T_j=T_{\xi^{\tilde{p}_{(j)}}\overline{\xi}^{\tilde{p}_{(j)}}}$, for
  $j=1,\ldots l$ mutually commute and
  \[
  \prod_{j=1}^lT_j=T_{\xi^p\overline{\xi}^q}
  \]
\end{corollary}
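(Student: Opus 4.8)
The plan is to reduce everything to the explicit action of quasi-homogeneous Toeplitz operators on the monomial basis $\{z^\alpha : \alpha \in J_n(m)\}$ provided by Lemma~\ref{Lemma-khomogeneous}. First I would verify that each $T_j=T_{\xi^{\tilde{p}_{(j)}}\overline{\xi}^{\tilde{q}_{(j)}}}$ is indeed a quasi-homogeneous Toeplitz operator to which the earlier results apply. The pair $\tilde{p}_{(j)},\tilde{q}_{(j)}$ is orthogonal: since $p\cdot q=\sum_i p_{(i)}\cdot q_{(i)}=0$ with all terms nonnegative, each $p_{(i)}\cdot q_{(i)}=0$, in particular $\tilde{p}_{(j)}\cdot\tilde{q}_{(j)}=p_{(j)}\cdot q_{(j)}=0$. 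It also satisfies the block-balance condition $|(\tilde{p}_{(j)})_{(i)}|=|(\tilde{q}_{(j)})_{(i)}|$ for every $i$ (both vanish for $i\ne j$ and equal $|p_{(j)}|=|q_{(j)}|$ for $i=j$). Hence $T_j$ falls under identity (\ref{spectrum-homoge-2}).

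The central simplification is that $\gamma_{1,k,m}\equiv 1$: the constant symbol $a\equiv 1$ yields $T_1=\mathrm{Id}$ by the reproducing property, so Lemma~\ref{Lemma-kradial} forces $\gamma_{1,k,m}(\alpha)=1$ for all $\alpha$. Substituting the exponents $\tilde{p}_{(j)},\tilde{q}_{(j)}$ into (\ref{spectrum-homoge-2}) then collapses the product over blocks to its $j$-th factor, giving
\[
T_j z^\alpha = F_j(\alpha_{(j)})\, z^{\alpha+\tilde{p}_{(j)}-\tilde{q}_{(j)}},\qquad F_j(\alpha_{(j)})=\frac{(\alpha_{(j)}+p_{(j)})!\,(k_j-1+|\alpha_{(j)}|)}{(\alpha_{(j)}+p_{(j)}-q_{(j)})!\,(k_j-1+|\alpha_{(j)}+p_{(j)}|)}
\]
whenever $\alpha+\tilde{p}_{(j)}-\tilde{q}_{(j)}\in J_n(m)$, and $T_j z^\alpha=0$ otherwise. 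The decisive structural features are that the shift $s_j:=\tilde{p}_{(j)}-\tilde{q}_{(j)}$ is supported in the $j$-th block and is degree-preserving (because $|p_{(j)}|=|q_{(j)}|$ gives $|\alpha_{(j)}+p_{(j)}-q_{(j)}|=|\alpha_{(j)}|$), and that the coefficient $F_j$ depends on $\alpha$ only through its $j$-th block.

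For commutativity I would, for $i\ne j$, compute $T_iT_j z^\alpha$ and $T_jT_i z^\alpha$ directly. Since $s_i$ lies in block $i$ and $s_j$ in block $j$, applying one shift leaves the other block of the exponent unchanged, so $F_i$ and $F_j$ are evaluated at the same arguments in both orders; the composite coefficient is $F_i(\alpha_{(i)})F_j(\alpha_{(j)})$ either way and the resulting monomial is $z^{\alpha+s_i+s_j}$ in both cases. One must also check that the two compositions vanish on exactly the same $\alpha$: because the shifts are degree-preserving and block-disjoint, both are nonzero precisely when $\alpha_{(i)}+p_{(i)}\ge q_{(i)}$ and $\alpha_{(j)}+p_{(j)}\ge q_{(j)}$ componentwise, a condition symmetric in $i,j$. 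This yields $T_iT_j=T_jT_i$ on the basis, hence on all of $\mA^2_m(\PC)$. For the product identity I would iterate the single-operator formula: composing $T_1,\dots,T_l$ the block-disjoint shifts add to $\sum_{j=1}^l s_j=p-q$, so $\prod_{j=1}^l T_j$ sends $z^\alpha$ to a multiple of $z^{\alpha+p-q}$, and since each $F_j$ sees only block $j$ (untouched by the other shifts) the accumulated scalar is exactly $\prod_{j=1}^l F_j(\alpha_{(j)})$. By (\ref{spectrum-homoge-2}) together with $\gamma_{1,k,m}\equiv 1$ this equals $\tilde{\gamma}_{1,k,p,q,m}(\alpha)$, which is precisely the eigenvalue datum of $T_{\xi^p\overline{\xi}^q}$ in Lemma~\ref{Lemma-khomogeneous}; comparing vanishing sets (both nonzero exactly when $\alpha+p-q\in J_n(m)$) gives $\prod_{j=1}^l T_j=T_{\xi^p\overline{\xi}^q}$.

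I expect the main obstacle to be not any single computation but the consistent bookkeeping of the supports: one must ensure that reordering or composing the operators never creates spurious mismatches between their ``nonzero'' regimes, and this is exactly what the degree-preserving hypothesis $|p_{(j)}|=|q_{(j)}|$ and the block-disjointness of the shifts guarantee. The identity $\gamma_{1,k,m}\equiv 1$ is what makes the coefficients multiply cleanly, without the extraneous powers of $\gamma_{1,k,m}$ that would otherwise obstruct the product formula.
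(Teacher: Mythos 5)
Your proof is correct and follows essentially the route the paper intends: the corollary is stated there as a direct consequence of Lemma~\ref{Lemma-khomogeneous} and the factorized eigenvalue formula \eqref{spectrum-homoge-2}, and your argument just works this out explicitly (with the right reading of the statement's typos, namely $\overline{\xi}^{\tilde{q}_{(j)}}$ and the product running to $l$). The observation that $\gamma_{1,k,m}\equiv 1$ together with the block-disjoint, degree-preserving shifts is exactly the mechanism behind the paper's ``as a consequence of the previous computations.''
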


We now obtain a necessary and sufficient condition for two given
quasi-homogeneous symbols to determine Toeplitz operators that commute
with each other. For the next result we switch to the homogeneous
coordinates of $\PC$ to obtain a result whose statement is independent
of the choice of charts.

\begin{theorem}\label{thm:commutative-iff}
  Let $k = (k_1, \dots, k_l) \in \Z_+^l$ be a partition of $n$ so that
  $(1,k) \in \Z_+^{l+1}$ is a partition of $n+1$, and let $p, q, u, v
  \in \Z_+^{n+1}$ be multi-indices that satisfy the following properties
  \begin{itemize}
  \item $p \perp q$ and $u \perp v$,
  \item $|p_{(j)}| = |q_{(j)}|$ and $|u_{(j)}| = |v_{(j)}|$ for all $j
    = 0, \dots, l$.
  \end{itemize}
  We are assuming the enumerations $p = (p_0, p_1, \dots, p_n) =
  (p_{(0)}, p_{(1)}, \dots, p_{(l)})$, so that in particular $p_0 =
  p_{(0)} = 0$. Also, the corresponding properties hold for $q, u, v$
  as well.

  Let $a\xi^p\overline{\xi}^q, b\xi^u\overline{\xi}^v \in
  \mHR_{(1,k)}$ be corresponding $(1,k)$-quasi-homogeneous
  quasi-radial symbols on $\PC$, where $a, b \in \mR_{(1,k)}$ are
  measurable and bounded symbols. Then, the Toeplitz operators
  $T_{a\xi^p\overline{\xi}^q}$ and $T_{b\xi^u\overline{\xi}^v}$
  commute on each weighted Bergman space $\mA^2_{m}(\PC)$ if and only
  if for each $s=1,\ldots,n$ one of the following conditions hold
  \begin{enumerate}
  \item $p_s=q_s=0$
  \item $u_s=v_s=0$
  \item $p_s=u_s=0$
  \item $q_s=v_s=0$
  \end{enumerate}
\end{theorem}
\begin{proof}
  As remarked, the first entry of the multi-indices $p, q, u, v \in
  \Z_+^{n+1}$ vanishes. Hence, we will replace such multi-indices with
  their counterparts in $\Z_+^n$ obtained by removing the first
  entry. In particular, the multi-indices $p, q, u, v \in \Z_+^n$ so
  obtained are such that both pairs $(p,q)$ and $(u,v)$ satisfy the
  hypothesis of Theorem~\ref{thm:commutativity-(1,k)}. We will proceed
  to compute the composition of operators for the corresponding
  symbols in $\C^n$ as considered in Remark~\ref{rmk:Cn}.

  First, we observe that the quantities
  $T_{b\xi^u\overline{\xi}^v}T_{a\xi^p\overline{\xi}^q} z^{\alpha}$
  and $T_{a\xi^p\overline{\xi}^q}T_{b\xi^u\overline{\xi}^v}
  z^{\alpha}$ are always simultaneously zero or non zero. Hence, we
  compute such expressions for $\alpha \in J_n(m)$ assuming that both
  are non zero.

  By (\ref{spectrum-homoge-2}), we have the following expression
  \begin{align*}
    T_{b\xi^u\overline{\xi}^v}T_{a\xi^p\overline{\xi}^q} z^{\alpha}
    &= \frac{2^l (\alpha+p)! (n+m)!}{(\alpha+p-q)! (m-|\alpha|)!
      \prod_{j=1}^{l}(k_j-1+|\alpha_{(j)}+p_{(j)}|)!} \\
    &\times \int_{\R^n_+}  a(r_1,\ldots,r_l)(1+r^2)^{-(n+m+1)}
    \prod_{j=1}^{l}r_j^{2|\alpha_{(j)}|+2k_j-1} dr_j \\
    &\times \frac{2^l (\alpha+p-q+u)! (n+m)!}{(\alpha+p-q+u-v)!
      (m-|\alpha|)!  \prod_{j=1}^{l}(k_j-1+|\alpha_{(j)}+u_{(j)}|)!} \\
    &\times \int_{\R^n_+}  b(r_1,\ldots,r_l)(1+r^2)^{-(n+m+1)}
    \prod_{j=1}^{l}r_j^{2|\alpha_{(j)}|+2k_j-1} dr_j \\
    &\times z^{\alpha+p-q+u-v}.
  \end{align*}

  Similarly, we also have
  \begin{align*}
    T_{a\xi^p\overline{\xi}^q}T_{b\xi^u\overline{\xi}^v} z^{\alpha}
    &= \frac{2^l (\alpha+u)! (n+m)!}{(\alpha+u-v)! (m-|\alpha|)!
      \prod_{j=1}^{l}(k_j-1+|\alpha_{(j)}+u_{(j)}|)!} \\
    &\times \int_{\R^n_+}  b(r_1,\ldots,r_l)(1+r^2)^{-(n+m+1)}
    \prod_{j=1}^{l}r_j^{2|\alpha_{(j)}|+2k_j-1} dr_j \\
    &\times \frac{2^l (\alpha+u-v+p)! (n+m)!}{(\alpha+p-q+u-v)!
      (m-|\alpha|)!  \prod_{j=1}^{l}(k_j-1+|\alpha_{(j)}+p_{(j)}|)!} \\
    &\times \int_{\R^n_+}  a(r_1,\ldots,r_l)(1+r^2)^{-(n+m+1)}
    \prod_{j=1}^{l}r_j^{2|\alpha_{(j)}|+2k_j-1} dr_j \\
    &\times z^{\alpha+p-q+u-v}
  \end{align*}

  Therefore, we conclude that
  $T_{b\xi^u\overline{\xi}^v}T_{a\xi^p\overline{\xi}^q}
  z^{\alpha}=T_{a\xi^p\overline{\xi}^q}T_{b\xi^u\overline{\xi}^v}
  z^{\alpha}$ if and only if
  \[
  \frac{(\alpha + p)!(\alpha + p - q + u)!}{(\alpha + p - q)!} =
  \frac{(\alpha + u)!(\alpha + u - v + p)!}{(\alpha + u - v)!}.
  \]
  Finally, one can easily check that the latter identity holds for
  every $\alpha \in J_n(m)$ and $m \in \Z_+$ if and only if the
  conclusion of the statement holds. This proves the Theorem.
\end{proof}

Finally, we present one of our main results: the construction of a
commutative Banach algebra of Toeplitz operators on $\PC$. Our
construction is parallel to the one presented in \cite{NikolaiQuasi}.

\begin{definition}
  \label{def:Akh}
  Let $k = (k_1, \dots, k_l) \in \Z_+^l$ be a partition of $n$ and $h
  \in \Z_+^l$ be such that $1 \leq h_j \leq k_j - 1$, for all $j = 1,
  \dots, l$. We denote with $\mA_{k,h}$ the set of symbols $\varphi
  \in \mHR_{(1,k)}$ that satisfy the following properties.
  \begin{enumerate}
  \item The symbol $\varphi = a \xi^p \overline{\xi}^q$ is
    $(1,k)$-quasi-homogeneous quasi-radial on $\PC$, in other words,
    it is a function of the form
    \[
    \varphi([w]) = a(|w_0|, \dots, |w_n|) \prod_{j=0}^l
    \left(\frac{w_{(j)}}{|w_{(j)}|}\right)^{p_{(j)}}
    \left(\frac{\overline{w}_{(j)}}{|w_{(j)}|}\right)^{q_{(j)}}
    \]
    where $a$ is a degree $0$ homogeneous function and $p,q \in
    \Z_+^{n+1}$.
  \item The multi-indices $p,q$ are orthogonal and satisfy $|p_{(j)}|
    = |q_{(j)}|$ for all $j = 0, \dots, l$. In particular, $p_0 =
    p_{(0)} = q_0 = q_{(0)} = 0$.
  \item The multi-indices $p,q$ satisfy
    \[
    p_{k_0 + \dots + k_{j-1} + r} = 0, \quad
    q_{k_0 + \dots + k_{j-1} + s} = 0,
    \]
    whenever $1 \leq s \leq h_j < r \leq k_j$ and $j = 1, \dots,
    l$.
  \end{enumerate}
\end{definition}

\begin{remark}
  \label{rmk:Akh}
  Note that with the notation of Definition~\ref{rmk:Akh} we have $k_0
  = 1$. Then, for a symbol $\varphi \in \mA_{k,h}$ as described in
  Definition~\ref{def:Akh} we can write
  \begin{align*}
    \varphi([w]) &= a(|w_0|, \dots, |w_n|) \prod_{j=1}^l
    \left(\frac{w_{(j)}}{|w_{(j)}|}\right)^{p_{(j)}}
    \left(\frac{\overline{w}_{(j)}}{|w_{(j)}|}\right)^{q_{(j)}} \\
    &= a(1, |w_1|/|w_0|, \dots, |w_n|/|w_0|) \prod_{j=1}^l
    \left(\frac{w_{(j)}/w_0}{|w_{(j)}|/|w_0|}\right)^{p_{(j)}}
    \left(\frac{\overline{w}_{(j)}/\overline{w_0}}{|w_{(j)}|/|w_0|}\right)^{q_{(j)}}
    \\
    &= a(1, |z_1|, \dots, |z_n|) \prod_{j=1}^l
    \left(\frac{z_{(j)}}{|z_{(j)}|}\right)^{p_{(j)}}
    \left(\frac{\overline{z}_{(j)}}{|z_{(j)}|}\right)^{q_{(j)}}
  \end{align*}
  where $z_{(j)} = w_{(j)}/w_0$ are components of the inhomogeneous
  coordinates of $\PC$. Here we have used the degree $0$ homogeneity
  of $a$ and the property $|p_{(j)}| = |q_{(j)}|$ for all $j = 1,
  \dots, l$. This shows that the symbols in $\mA_{k,h}$ have a
  homogeneity property and that they can thus be expressed through
  inhomogeneous coordinates of $\PC$.
\end{remark}

An immediate consequence of Definition~\ref{def:Akh} is that, with the
notation of Theorem~\ref{thm:commutative-iff}, for every $s = 1,
\dots, n$ either (iii) or (iv) from the conclusion of such Theorem is
always satisfied for two given symbols in $\mA_{k,h}$. In particular,
we conclude the following result.

\begin{theorem}[Banach algebra of symbols with commuting operators]
  \label{thm:Banachalgebra}
  Let $k = (k_1, \dots, k_l) \in \Z_+^l$ be a partition of $n$ and $h
  \in \Z_+^l$ be such that $1 \leq h_j \leq k_j - 1$, for all $j = 1,
  \dots, l$. Then, the Banach algebra of Toeplitz operators generated
  by the symbols in $\mA_{k,h}$ is commutative on each weighted
  Bergman space $\mA^2_m(\PC)$.
\end{theorem}

\section{BUNDLES OF LAGRANGIAN FRAMES AND QUASI-HOMOGENEOUS SYMBOLS}
\label{sec:Lagrangian}
In the rest of this work, we fix a partition $k = (k_1, \dots, k_l)
\in \Z^l_+$ of $n$ and $h \in \Z^l_+$ that satisfy the conditions from
Definition~\ref{def:Akh}. We will provide in this section a geometric
construction on the projective space $\PC$ which is relevant to the
set of symbols $\mA_{k,h} \subset \mHR_{(1,k)}$.

Recall that the special linear, projective special groups on
$\C^{n+1}$, as well as their unitary counterparts, are defined,
respectively, as follows
\begin{align*}
  \SL(n+1,\C) &= \{ A \in M_{n+1}(\C) : \det(A) = 1 \} \\
  \PSL(n+1,\C) &= \SL(n+1,\C)/(\T I_{n+1} \cap \SL(n+1,\C)) \\
  \SU(n+1) &= \{ A \in M_{n+1}(\C) : A^* A = I_{n+1}, \det(A) = 1 \} \\
  \PSU(n+1) &= \SU(n+1) / (\T I_{n+1} \cap \SU(n+1)).
\end{align*}
It is easily seen that $\T I_{n+1} \cap \SL(n+1, \C) = \T I_{n+1} \cap
\SU(n+1)$, that it is cyclic of order $n+1$ and that it is precisely
the center of both $\SL(n+1, \C)$ and $\SU(n+1)$. We also note that
the natural quotient map $\SL(n+1, \C) \rightarrow \PSL(n+1, \C)$ is a
covering homomorphism whose (finite) kernel is $\T I_{n+1} \cap
\SL(n+1, \C)$. A corresponding observation holds for the covering
homomorphism obtained by restricting to $\SU(n+1) \rightarrow
\PSU(n+1)$. In what follows, for $A \in \SL(n+1, \C)$ we will denote
with $[A]$ its image in $\PSL(n+1, \C)$ with respect to this natural
covering homomorphism.

There is natural action of $\PSL(n+1, \C)$ on $\PC$ given by the
assigment
\[
([A], [w]) \mapsto [Aw],
\]
where $A \in \SL(n+1, \C)$ and $w \in \C^{n+1}$. Furthermore, it is
well known that this action realizes the group of biholomorphisms of
$\PC$. Also, the restriction of this action to $\SU(n+1)$ realizes the
connected component of the identity of the group of isometries for
$\PC$ with the Fubini-Study metric.

The following is an elementary result from the theory of compact
semisimple Lie groups (e.g.~see \cite{Helgason}).

\begin{proposition}
  \label{prop:SU-max-compact}
  Let us denote
  \[
  \mT = \{ [D] \in \PSU(n+1) : D \in \SU(n+1) \text{ is diagonal } \}.
  \]
  Then $\mT$ is isomorphic as a Lie group to $\T^n$ and it is a
  maximal Abelian subgroup of $\PSU(n+1)$. Furthermore, every maximal
  Abelian subgroup of $\PSU(n+1)$ is conjugate to $\mT$.
\end{proposition}

A remarkable fact about the commutative $C^*$-algebras of Toeplitz
operators introduced in \cite{GQV-disk}, \cite{QV-Ball1},
\cite{QV-Ball2} and \cite{QS-Proj} is that the correspoding sets of
symbols have a naturally associated maximal Abelian subgroup of the
group of isometries of the complex spaces supporting the Bergman
spaces. We now prove that a similar situation is also valid for the
sets of symbols $\mA_{k,h}$. This will be given as a subgroup of the
torus $\mT$.

\begin{theorem}[Torus associated to $\mA_{k,h}$]
  \label{thm:torus-mA}
  Let $k = (k_1, \dots, k_l) \in \Z^l_+$ be a partition of $n$ and let
  $h \in \Z^l_+$ be such that the conditions of
  Definition~\ref{def:Akh} hold. Consider the subgroup $\mT_k$ of
  $\mT$ defined by the condition
  \begin{itemize}
  \item For $M \in \mT$, we have $M \in \mT_k$ if and only if
    $\varphi(M[w]) = \varphi([w])$ for every $\varphi \in \mA_{k,h}$
    and $[w] \in \PC$.
  \end{itemize}
  Then, $\mT_k$ is a closed subgroup of $\mT$ isomorphic to
  $\T^l$. Furthermore, $M \in \mT_k$ if and only if we have
  \begin{equation}
    \label{eq:M-mT}
    M = \left[
      \begin{pmatrix}
        \overline{t}_1^{k_1} \cdots \overline{t}_l^{k_l} & 0 & \cdots & 0 \\
        0 & t_1 I_{k_1} & \cdots & 0 \\
        \vdots & \vdots & \ddots & \vdots \\
        0 & 0 & \cdots & t_l I_{k_l}
      \end{pmatrix}
    \right]
  \end{equation}
  for some $t_1, \dots, t_l \in \T$.
\end{theorem}
\begin{proof}
  Let us consider any given symbol $\varphi \in \mA_{k,h}$. In
  particular, we have
  \[
  \varphi([w]) = a(|w_0|, \dots, |w_n|) \prod_{j=0}^l
  \left(\frac{w_{(j)}}{|w_{(j)}|}\right)^{p_{(j)}}
  \left(\frac{\overline{w}_{(j)}}{|w_{(j)}|}\right)^{q_{(j)}}
  \]
  where $a$ and $p,q \in \Z_+^{n+1}$ satisfy the conditions from
  Definition~\ref{def:Akh}. In particular, as observed in
  Remark~\ref{rmk:Akh}, we have $p_0 = q_0 = 0$ and we further have
  \[
  \varphi([w]) = a(|w_0|, \dots, |w_n|) \prod_{j=1}^l
  \left(\frac{w_{(j)}}{|w_{(j)}|}\right)^{p_{(j)}}
  \left(\frac{\overline{w}_{(j)}}{|w_{(j)}|}\right)^{q_{(j)}}.
  \]
  Let $M \in \mT$ be an element defined a diagonal matrix in
  $\SU(n+1)$ with diagonal elements $t_0, \dots, t_n \in \T$ in that
  order. Hence, a direct computation shows that
  \[
  \varphi(M[w]) = a(|w_0|, \dots, |w_n|) \prod_{j=1}^l t_{(j)}^{p(j)}
  \overline{t}_{(j)}^{q(j)} \prod_{j=1}^l
  \left(\frac{w_{(j)}}{|w_{(j)}|}\right)^{p_{(j)}}
  \left(\frac{\overline{w}_{(j)}}{|w_{(j)}|}\right)^{q_{(j)}}.
  \]
  We conclude that, for our choice of $M$, we have $M \in \mT$ if and
  only if
  \begin{equation}
    \label{eq:prod1}
    \prod_{j=1}^l t_{(j)}^{p(j)} \overline{t}_{(j)}^{q(j)} = 1
  \end{equation}
  for every $p,q \in \Z_+^{n+1}$ that satisfy conditions (ii) and (iii)
  from Definition~\ref{def:Akh}.

  From the latter remarks it is easy to see that every $M$ of the form
  given by equation~\eqref{eq:M-mT} belongs to $\mT$. This is true
  since for $p,q$ satisfying condition (ii) from
  Definition~\ref{def:Akh} we have $|p_{(j)}| = |q_{(j)}|$ for all $j
  = 1, \dots, l$.

  Conversely, let us assume that $M \in \mT$, so that one has
  $\varphi(M[w]) = \varphi([w])$ for $\varphi$ as above and every $[w]
  \in \PC$. We will pick a particular choice of $p,q \in
  \Z_+^{n+1}$. Given $1 \leq j_0 \leq l$ choose $r,s$ such that $1 \leq
  r \leq h_j < s \leq k_j$ and define for $j = 1, \dots, l$
  \begin{align*}
    (p_{(j)})_i &=
    \begin{cases}
      1 \text{ if } j = j_0 \text{ and } i = r, \\
      0 \text{ otherwise},
    \end{cases} \\
    (q_{(j)})_i &=
    \begin{cases}
      1 \text{ if } j = j_0 \text{ and } i = s, \\
      0 \text{ otherwise}.
    \end{cases}
  \end{align*}
  We also choose $a = 1$. Then, it is easy to check that the
  corresponding symbol $\varphi$ belongs to $\mA_{k,h}$. For this
  symbol, the condition given by equation~\eqref{eq:prod1} reduces to
  \[
  (t_{(j_0)})_r = (t_{(j_0)})_s
  \]
  for all $r,s$ satisfying $1 \leq r \leq h_j < s \leq k_j$ for our
  arbitrarily given $1 \leq j_0 \leq l$. In particular, the diagonal
  entries of the matrix $D$ such that $M = [D]$ are all the same on
  each one of the index intervals defined by the partition $k$. Since
  $\det(D) = 1$ we thus conclude that $M$ is the form shown in
  equation~\eqref{eq:M-mT}.

  Hence we have proved the last claim of the statement. From this it
  clearly follows that $\mT_k$ is closed and isomorphic to $\T^l$, for
  which one uses that the canonical map $\SU(n+1) \rightarrow
  \PSU(n+1)$ is a covering homomorphism with finite kernel.
\end{proof}

\begin{remark}
  \label{rmk:mT-ind-of-h}
  It is interesting to note that the group $\mT_k$ associated to the
  set of symbols $\mA_{k,h}$ only depends on the partition $k$ and not
  on the multi-index $h$.
\end{remark}

As remarked above, the commutative $C^*$-algebras of Toeplitz
operators introduced in \cite{GQV-disk}, \cite{QV-Ball1},
\cite{QV-Ball2} and \cite{QS-Proj} came with a natural maximal Abelian
subgroup. Furthermore, such subgroup allowed to introduce a foliation
with a distinguished symplectic geometry. We will now show that the
group $\mT_k$ can be used to obtain a similar geometric construction
associated to the set of symbols $\mA_{k,h}$. Such construction will
be performed on a stratification of the projective space $\PC$.

First, we consider the complexification of the torus $\mT_k$ as
described by the conclusion of Theorem~\ref{thm:torus-mA}. More
precisely, we denote
\[
\mT^\C_k = \left\{ \left[
    \begin{pmatrix}
      z_0 & 0 & \cdots & 0 \\
      0 & z_1 I_{k_1} & \cdots & 0 \\
      \vdots & \vdots & \ddots & \vdots \\
      0 & 0 & \cdots & z_l I_{k_l}
    \end{pmatrix}
  \right] : z_0, \dots, z_l \in \C^*,\quad z_0 z_1^{k_1}\cdots
  z_l^{k_l} = 1 \right\}.
\]
In other words, $\mT^\C_k$ consists of the elements $[A] \in \PSL(n+1,
\C)$ where $A$ is a block diagonal matrix whose blocks along the
diagonal are $(z_0, z_{1} I_{k_j}, \dots, z_{k_l} I_{k_l})$ for some
$z_0, \dots, z_{k_l} \in \C^*$ such that $z_0 z_1^{k_1}\cdots
z_l^{k_l} = 1$.

Hence, $\mT^\C_k$ is a subgroup of $\PSL(n+1, \C)$ isomorphic to
$\C^{*l}$ containing $\mT_k$. In particular, $\mT^\C_k$ acts
biholomorphically on the projective space $\PC$. But, as it is easily
seen, the $\mT^\C_k$-orbits in $\PC$ have varying dimensions. We deal
with this situation considering a stratification associated to the
partition $k$.

For the given partition $k = (k_1, \dots, k_l)$ of $n$, consider the
decomposition $\C^{n+1} = \C \times \C^{k_1} \times \dots \times
\C^{k_l}$ that induces the corresponding decomposition $w = (w_0 =
w_{(0)}, w_{(1)}, \dots, w_{(l)})$ for every $w \in \C^{n+1}$. With
this notation we define
\begin{align*}
  V_0 &= \{ [w] \in \PC : w_0, w_{(1)}, \dots, w_{(l)} \not= 0 \} \\
  V_j &= \{ [w] \in \PC : w_{(j)}, \dots, w_{(l)} \not= 0,\quad w_0 =
  0, \dots, w_{(j-1)} = 0 \}
\end{align*}
where $j = 1, \dots, l$. Note that $V_0, \dots, V_l$ provides a
partition of $\PC$ into smooth complex quasi-projective
subvarieties. Furthermore, a direct inspection shows that $V_0$ is the
largest subset of $\PC$ where $\mT^\C_k$ acts freely. Recall that a
group acting on a set is said to do so freely if the stabilizers are
all trivial, in which case one also says that the action is free.

Consider the following finite sequence of subgroups of
$\mT^\C_k$. First we let $G_0 = \mT^\C_k$. And for every $j = 1,
\dots, l$ we define $G_j$ as the subgroup of $G_0$ which consists of
the elements $[A] \in \PSL(n+1, \C)$ where $A$ is a block diagonal
matrix whose blocks along the diagonal are
\[
(z_0, I_{k_1}, \dots,
I_{k_{j-1}}, z_j I_{k_j}, \dots, z_l I_{k_l})
\]
for some $z_0, z_j, \dots, z_l \in \C^*$ such that $z_0
z_j^{k_j}\cdots z_l^{k_l} = 1$. In particular, we clearly have that
$G_j$ is isomorphic to $\C^{*(l-j+1)}$ for every $j = 0, \dots, l$. We
will also consider for $j = 0, \dots, l$ the following compact groups
which can be thought as real forms of the groups $G_j$
\[
\mT_j = G_j \cap \mT_k.
\]
In particular, one can easily check that $\mT_j \simeq \T^{l-j+1}$,
for every $j = 0, \dots, l$, and that $\mT_0 = \mT_k$.

We now state the following easy to prove result. The main point is to
observe that $V_j \cup \dots \cup V_l$ is the subvariety defined by
the homogeneous equations $w_0 = 0, \dots, w_{(j-1)} = 0$. The rest
follows from this or it can be verified directly.

\begin{lemma}
  \label{lem:strat-free}
  For the partition $\PC = V_0 \cup \dots \cup V_l$ and the groups
  $G_0, \dots, G_l$ defined above, the following properties are
  satisfied for every $j = 0, \dots, l$.
  \begin{enumerate}
  \item The subset $V_j \cup \dots \cup V_l$ is a closed smooth
    projective subvariety of $\PC$.
  \item The smooth variety $V_j$ is open in $V_j \cup \dots \cup V_l$.
  \item The group $G_j$ leaves invariant the set $V_j \cup \dots \cup
    V_l$.
  \item The smooth variety $V_j$ is the largest subset of $V_j \cup
    \dots \cup V_l$ where $G_j$ acts freely.
  \end{enumerate}
\end{lemma}

We now construct a finite collection of principal fiber bundles whose
total spaces are the subvarities $V_j$ and whose structure groups are
subgroups of $G_j$. We refer to \cite{KNI} for the notion of principal
fiber bundle. This yields a partition of $\PC$ into principal fiber
bundles associated to the partition $k$, and it thus provides a
geometric structure for the set of symbols $\mA_{k,h}$.

\begin{theorem}[Principal bundles associated to $\mA_{k,h}$]
  \label{thm:partitions-bundles}
  For $k = (k_1, \dots, k_l) \in \Z^l_+$ a partition of $n$, consider
  the subvarieties $V_0, \dots, V_l$ of $\PC$ and the subgroups $G_0,
  \dots, G_l$ of $\mT^\C_k$ as defined above. Then, the following
  property is satisfied for every $j = 0, \dots, l$.
  \begin{itemize}
  \item The quotient space $G_j\backslash V_j$ is a smooth complex
    manifold so that the natural quotient map $V_j \rightarrow
    G_j\backslash V_j$ is a smooth complex principal fiber bundle with
    structure group $G_j \simeq \C^{*(l-j+1)}$. In particular, every
    $G_j$-orbit is a complex submanifold of $\PC$.
  \end{itemize}
\end{theorem}
\begin{proof}
  It is well known that a free proper action of a Lie group provides a
  quotient map that defines a principal fiber bundle (see for example
  \cite{GHL}). By virtue of Lemma~\ref{lem:strat-free} it is enough to
  show that the action of $G_j$ on $V_j$ is proper.

  We recall that a group $G$ acts properly on a manifold $V$ if for
  every compact subset $K \subset V$ the set
  \[
  \{ g \in G : gK \cap K \not= \emptyset \}
  \]
  is relatively compact in $G$.

  Choose $K \subset V_j$ a compact subset. Then, there exists
  $\widehat{K} \subset \C^{n+1} \setminus \{0\}$ a compact subset such
  that $K = \{ [w] : w \in \widehat{K} \}$. If we denote with $\pi$
  the canonical projection $\C^{n+1} \setminus \{0\} \rightarrow \PC$,
  then we can choose, for example, $\widehat{K} = \pi^{-1}(K) \cap
  \bS^{n+1}$. Note that from the definition of $V_j$ we have $w_{(j)},
  \dots, w_{(l)} \not= 0$ for every $w \in \widehat{K}$. And by
  compactness of $\widehat{K}$, there exists constants $c_1, c_2 > 0$
  such that
  \begin{equation}
    \label{eq:boundsK}
    c_1 \leq |w_{(r)}| \leq c_2
  \end{equation}
  for every $w \in \widehat{K}$ and $r = j, \dots, l$.

  Let $[A] \in G_j$ be such that $[A] K \cap K \not= \emptyset$. Hence
  we can assume the following
  \begin{enumerate}
  \item the matrix $A$ is a diagonal block matrix whose blocks along
    the diagonal are
    \[
    (z_0, I_{k_1}, \dots, I_{k_{j-1}}, z_j I_{k_j}, \dots, z_l
    I_{k_l})
    \]
    for some $z_0, z_j, \dots, z_l \in \C^*$ such that $z_0
    z_j^{k_j}\cdots z_l^{k_l} = 1$,
  \item there exist $w, w' \in \widehat{K}$ and $\lambda \in \C^*$
    such that for every $r = j, \dots l$ we have
    \[
    \lambda z_r w_{(r)} = w'_{(r)}.
    \]
  \end{enumerate}
  We conclude from \eqref{eq:boundsK} and (ii) that for every $r = j,
  \dots, l$ we have
  \begin{equation}
    \label{eq:bounds-lambdaz}
    d_1 \leq |\lambda z_r| \leq d_2
  \end{equation}
  where $d_1 = c_1/c_2$ and $d_2 = c_2/c_1$. This and (i) implies in
  turn that
  \[
  d_1^{\sigma_j} \leq |\lambda|^{\sigma_j} = |\lambda
  z_0|\prod_{r=j}^l |\lambda z_r|^{k_r} \leq d_2^{\sigma_j},
  \]
  where $\sigma_j = k_j + \dots + k_l + 1$. Hence we have $d_1 \leq
  |\lambda| \leq d_2$, which together with
  Equation~\eqref{eq:bounds-lambdaz} yields for $r = j, \dots, l$ the
  estimate
  \[
  \frac{d_1}{d_2} \leq |z_r| \leq \frac{d_2}{d_1}.
  \]
  Since $d_1, d_2 > 0$, the latter inequalities define a compact
  subset of $G_j \simeq \C^{*(l-j+1)}$. This completes the proof of
  the properness of the $G_j$-action on $V_j$.
\end{proof}

We recall the definition of Lagrangian frame introduced in
\cite{QV-Ball1, QV-Ball2, QV-Reinhardt}. We refer to the latter works
for further details on the notions involved.

\begin{definition}
  On a K\"ahler manifold $N$, a Lagrangian frame is a pair $(\OO, \F)$
  of smooth foliations that satisfy the following properties.
  \begin{itemize}
  \item Both foliations are Lagrangian. In other words, the leaves of
    both foliations are Lagrangian submanifolds of $N$.
  \item If $L_1$ and $L_2$ are leaves of $\OO$ and $\F$, respectively,
    then $T_xL_1 \perp T_xL_2$ at every $x \in L_1\cap L_2$.
  \item The foliation $\OO$ is Riemannian. In other words, the
    Riemannian metric of $N$ is invariant by the leaf holonomy of
    $\OO$.
  \item The foliation $\F$ is totally geodesic. In other words, its
    leaves are totally geodesic submanifolds of $N$.
  \end{itemize}
  We will refer to $\OO$ and $\F$ as the Riemannian and totally
  geodesic foliations, respectively, of the Lagrangian frame.
\end{definition}

The next result shows that the fibers of the submersion of the
principal bundles $V_j \rightarrow G_j\backslash V_j$ carry Lagrangian
frames naturally associated to the symbols $\mA_{k,h}$. We observe and
emphasize that the K\"ahler structure considered on any complex
submanifold of $\PC$ is the one obtained by restriction of the
Fubini-Study metric to the corresponding submanifold.

\begin{theorem}[Lagrangian frames associated to $\mA_{k,h}$]
  \label{thm:Lframes}
  For $k = (k_1, \dots, k_l) \in \Z^l_+$ a partition of $n$, consider
  the subvarieties $V_0, \dots, V_l$ of $\PC$ and the subgroups $G_0,
  \dots, G_l$ of $\mT^\C_k$ as defined above. Then, for every $j = 0,
  \dots, l$ and for every fiber $F$ of the principal bundle $V_j
  \rightarrow G_j\backslash V_j$, the following properties hold.
  \begin{enumerate}
  \item The action of $\mT_j$ restricted to $F$ defines a Riemannian
    foliation $\OO_F$ on whose leaves every symbol that belongs to
    $\mA_{k,h}$ is constant.
  \item The vector bundle $T\OO_F^\perp$ defined as the orthogonal
    complement of $T\OO_F$ inside of $TF$ is integrable to a totally
    geodesic foliation $\iO_F$.
  \item The pair $(\OO_F, \iO_F)$ is a Lagrangian frame of the complex
    manifold $F$ for the K\"ahler structure on $F$ inherited from
    $\PC$.
  \end{enumerate}
\end{theorem}
\begin{proof}
  Let us fix $j$ and $F$ as described in the statement.

  By Lemma~\ref{lem:strat-free} the group $G_j$ acts freely on $F$,
  because $F \subset V_j$. Furthermore, note that by
  Theorem~\ref{thm:partitions-bundles} and our choices, $F$ is in fact
  a free $G_j$-orbit. In particular, $\dim_\C F = \dim_\C G_j =
  l-j+1$. Also, since $\mT_j$ is a subgroup of $G_j$, we conclude that
  $\mT_j$ acts freely on $F$, thus defining a foliation $\OO_F$ whose
  leaves have (real) dimension $\dim \mT_j = l-j+1$.

  We now recall that the $\mT_j \subset \mT$ and that, by
  Proposition~\ref{prop:SU-max-compact}, the latter acts by
  isometries. This implies that the $\mT_j$-action on $F$ is isometric
  as well. This last property implies that the foliation $\OO_F$ is
  Riemannian (see, for example, \cite{QV-Reinhardt}). Also, since
  $\mT_j \subset \mT$, Theorem~\ref{thm:torus-mA} implies that the
  symbols belonging to $\mA_{k,h}$ are $\mT_j$-invariant and so
  constant on the leaves of $\OO_F$. This proves (i).

  It is known that the $\mT$-action on $\PC$ has isotropic orbits: the
  $\mT$-orbits are null with respect to the symplectic form of
  $\PC$. This has been verified in \cite{QS-Proj} for $V_0 \subset
  \PC$. More precisely, the latter claim is the content of
  Theorems~6.7 and 6.8 from \cite{QS-Proj}, whose proof is a direct
  consequence of Theorems~2.1 and 3.1 from \cite{NT}. We now observe
  that the results found in \cite{NT} are in fact stated for arbitrary
  orbits of the maximal compact subgroup $\mT$. This can be easily
  applied to conclude that the every $\mT$-orbit is in fact an
  isotropic submanifold of $\PC$. Next, the elements $[w] \in V_j$ are
  characterized by the conditions
  \[
  w_{(j)}, \dots, w_{(l)} \not= 0,\quad w_0 = 0, \dots, w_{(j-1)} = 0,
  \]
  from which it is easily seen that for every $t \in \mT$ there exists
  $t' \in \mT_j$ such that $t[w] = t'[w]$ (e.g.~define the components
  of $t'$ as $1$ at the positions where $w$ vanishes). This implies
  that the $\mT$-orbits in $V_j$ are precisely $\mT_j$-orbits, thus
  that the foliation $\OO_F$ has isotropic leaves. Since the real
  dimension of such leaves is $\dim \mT_j = l-j+1 = \dim_\C F$, we
  conclude that the foliation $\OO_F$ is Lagrangian in $F$.

  Since the orthogonal complement of a Riemannian foliation is totally
  geodesic (see, for example, \cite{QV-Reinhardt}), to prove (ii) and
  (iii) it suffices to show that $T\OO_F^\perp = iT\OO_F$ is
  integrable.

  To prove the integrability of $iT\OO_F$ let us consider $\{ X_r : r
  = 1, \dots, l-j+1 \}$ a base for the Lie algebra of $\mT_j$. The
  $\mT_j$-action on $F$ induces a family of vector fields $\{ X_r^* :
  r = 1, \dots, l-j+1 \}$ on $F$ characterized as those having flows
  given by $\{ \exp(X_r) : r = 1, \dots, l-j+1 \}$, respectively. We
  refer to \cite{Helgason} for further details on this
  construction. Since the $\mT_j$-action on $F$ is free, it follows
  that the vector fields $\{ X_r^* : r = 1, \dots, l-j+1 \}$ are
  linearly independent at every point of $F$, thus defining a
  generating set for $T\OO_F$ at every point of $F$. Furthermore,
  since $\mT_j$ is Abelian $[X^*_r, X^*_s] = 0$ for every $r,s$.

  On the other hand, for $J$ the complex structure, the set of vector
  fields $\{ JX_r^* : r = 1, \dots, l-j+1 \}$ yields a generating set
  for $iT\OO_F$ at every point of $F$.

  \emph{Claim:} For every $r$, the vector fields $X^*_r$ and $JX^*_r$
  are holomorphic, i.e.~they integrate to holomorphic local flows.

  To prove the claim, we first note that the vector fields $X^*_r$
  integrate by a definition to local flows which are $1$-parameter
  subgroups of the $\mT_j$-action. Since the latter is holomorphic, we
  conclude that the vector fields $X^*_r$ are holomorphic.

  We now consider the vector fields $JX^*_r$. First we recall that
  $\mT_j \subset \mT \subset \PSL(n+1,\C)$, which implies that
  \[
  X_r \in \Lie(\mT_j) \subset \Lie(\PSL(n+1,\C)) = \Lie(\SL(n+1,\C)) =
  \sli(n+1,\C).
  \]
  Let us denote with $\pi : \C^{n+1}\setminus \{0\} \rightarrow \PC$
  the canonical quotient map. From our definitions, it is clear that
  the natural $\SL(n+1,\C)$-action on $\C^{n+1}\setminus \{0\}$
  descends through $\pi$ to the $\PSL(n+1,\C)$-action on $\PC$. Hence,
  if we denote with $\widehat{X}_r$ the vector field on
  $\C^{n+1}\setminus \{0\}$ induced from $X_r$ and the
  $\SL(n+1,\C)$-action, then it is clear that $\widehat{X}_r$ and
  $X^*_r$ are $\pi$-related; in other words, we have
  \[
  X^*_r = d\pi(\widehat{X}_r).
  \]
  And since $\pi$ is holomorphic, we also have
  \[
  JX^*_r = d\pi(J\widehat{X}_r).
  \]
  In particular, to prove that $JX^*_r$ is holomorphic, it is enough
  to prove that $J\widehat{X}_r$ integrates to a holomorphic local
  flow.

  Since $X_r \in \Lie(\mT) \subset \sli(n+1,\C)$, its matrix is
  diagonal and pure imaginary. If $ic_0, \dots, ic_n$ are its diagonal
  elements, then $\widehat{X}_r$ integrates to the flow on
  $\C^{n+1}\setminus \{0\}$ given by
  \begin{align*}
    \R \times \C^{n+1}\setminus \{0\} &\rightarrow \C^{n+1}\setminus
    \{0\} \\
    (\theta, w) &\mapsto (e^{ic_0\theta}w_0, \dots, e^{ic_n\theta}w_n).
  \end{align*}
  In particular, we have on $\C^{n+1}\setminus \{0\}$ that
  \[
  \widehat{X}_r|_w = (ic_0w_0, \dots, ic_nw_n),
  \]
  and so that
  \[
  J\widehat{X}_r|_w = (c_0w_0, \dots, c_nw_n).
  \]
  The latter vector field clearly integrates to the flow on
  $\C^{n+1}\setminus \{0\}$ given by
  \begin{align*}
    \R \times \C^{n+1}\setminus \{0\} &\rightarrow \C^{n+1}\setminus
    \{0\} \\
    (\theta, w) &\mapsto (e^{c_0\theta}w_0, \dots, e^{c_n\theta}w_n),
  \end{align*}
  which is clearly holomorphic. This implies the holomorphicity of
  $JX^*_r$ and thus completes the proof of the Claim.

  Once the above is given, we have for every $r,s$
  \[
  [JX^*_r, JX^*_s] = J[X^*_r, JX^*_s] = J^2[X^*_r, X^*_s] = 0.
  \]
  Here we have used in the first and second identities the fact that
  $JX^*_r$ and $X^*_r$, respectively, define Lie derivatives that
  commute with $J$; the latter is a consequence of the fact that both
  vector fields are holomorphic (see \cite{KNII}). Thus, we have
  proved that the bundle $iT\OO_p$ has a set of sections that generate
  the fibers and commute pairwise. Hence, the integrability of
  $iT\OO_p$ follows from Frobenius Theorem.
\end{proof}

Finally, we prove that a suitable complement of $\mT_j$ in $G_j$ acts
by symmetries of the bundle obtained in
Theorem~\ref{thm:partitions-bundles}.

As above, consider a partition $k = (k_1, \dots, k_l) \in \Z^l_+$ of
$n$. For every $j = 0, \dots, l$ define the group $H_j$ as the
subgroup of $\PSL(n+1, \C)$ which consists of the classes $[D]$ where
$D$ is a block diagonal matrix with diagonal entries
\[
(1, I_{k_1}, \dots, I_{k_{j-1}}, D_j, \dots, D_l).
\]
where $D_r$ is a $k_r \times k_r$ diagonal matrix such that $\det(D_r)
= 1$.

Following our previous notation, we will also denote
\[
\mT^\C = \{ [D] : D \in \SL(n+1,\C) \text{ is diagonal}\}
\]
Then, the next result is a simple exercise.

\begin{lemma}
  \label{lem:Gj-Hj}
  For $k = (k_1, \dots, k_l) \in \Z^l_+$ a partition of $n$, and for
  every $j = 0, \dots, l$, the map
  \begin{align*}
    G_j \times H_j &\rightarrow \mT^\C \\
    ([D],[E]) &\mapsto [DE]
  \end{align*}
  is an isomorphism of Lie groups.
\end{lemma}

Let us fix $j = 0, \dots, l$. As a subgroup of $\PSL(n+1,\C)$, the
group $H_j$ clearly acts holomorphically on $\PC$. Also, from the
definition of $V_j$, the $H_j$-action clearly satisfies the following
properties
\begin{itemize}
\item $H_j$ leaves invariant $V_j$.
\item The $H_j$-action on $V_J$ is free.
\end{itemize}
Furthermore, by Lemma~\ref{lem:Gj-Hj} the actions of the groups $G_j$
and $H_j$ commute with each other. In particular, the $H_j$-action
maps every $G_j$-orbits in $V_j$ onto some $G_j$-orbit in $V_j$. This
construction allows us to obtain the following result. We refer to
\cite{KNI} for the definition of an automorphism of a principal
bundle.

\begin{theorem}
  \label{thm:bundleaut}
  For $k = (k_1, \dots, k_l) \in \Z^l_+$ a partition of $n$, and for
  every $j = 0, \dots, l$ the $H_j$-action on $V_j$ descends to an
  action on $G_j\backslash V_j$ so that the $H_J$-action defines an
  automorphism of the principal bundle
  \[
  \pi_j : V_j \rightarrow G_j \backslash V_j.
  \]
  In particular, the $H_j$-action maps fibers onto fibers (of
  $\pi_j$). Furthermore, the $H_j\cap\mT$-action maps the Lagrangian
  frames (as defined by Theorem~\ref{thm:Lframes}) into the
  corresponding Lagrangian frames.
\end{theorem}
\begin{proof}
  By the above, only the last part requires justification. But such
  claim is also clear since the leaves of the Riemannian foliation
  of the Lagrangian frames are defined as orbits of a subgroup of
  $G_j$, the totally geodesic foliation as its orthogonal complement
  and because the $H_j\cap\mT$-action is isometric.
\end{proof}


\begin{thebibliography}{99}
\bibitem{GHL} \textsc{S. Gallot, D. Hulin, J. Lafontaine},
  \textit{Riemannian geometry}. Third
  edition. Universitext. Springer-Verlag, Berlin, 2004.

\bibitem{Griffiths-Harris} \textsc{P. Griffiths, J. Harris},
  \textit{Principles of algebraic geometry}. Reprint of the 1978
  original. Wiley Classics Library. John Wiley \& Sons, Inc., New
  York, 1994.

\bibitem{GQV-disk} \textsc{S. Grudsky, R. Quiroga-Barranco,
    N.~Vasilevski}, Commutative $C^*$-algebras of Toeplitz operators
  and quantization on the unit disk, \textit{J. Funct. Anal.}
  \textbf{234} (2006), no. 1, 1--44.

\bibitem{Helgason} \textsc{S. Helgason}, \textit{Differential
    geometry, Lie groups, and symmetric spaces}, Pure and Applied
  Mathematics, 80. Academic Press, Inc., New York-London, 1978.

\bibitem{KNI} \textsc{S. Kobayashi, K. Nomizu}, \textit{Foundations of
    differential geometry}. Vol. I. Reprint of the 1969
  original. Wiley Classics Library. A Wiley-Interscience
  Publication. John Wiley \& Sons, Inc., New York, 1996.

\bibitem{KNII} \textsc{S. Kobayashi, K. Nomizu}, \textit{Foundations
    of differential geometry}. Vol. II. Reprint of the 1969
  original. Wiley Classics Library. A Wiley-Interscience
  Publication. John Wiley \& Sons, Inc., New York, 1996.

\bibitem{NT} \textsc{H. Naitoh, M. Takeuchi}, \textit{Totally real
    submanifolds and symmetric bounded domains}. Osaka
  J. Math. \textbf{19} (1982), no. 4, 717–731.

\bibitem{QS-Proj} \textsc{R. Quiroga-Barranco, A. Sanchez-Nungaray},
  Commutative $C^*$-algebras of Toeplitz operators on the complex
  projective space, \textit{Integral Equations and Operator Theory},
  \textbf{71} (2011), no. 2, 225--243

\bibitem{QV-Reinhardt} \textsc{R. Quiroga-Barranco, N. Vasilevski},
  Commutative algebras of Toeplitz operators on the Reinhardt domains,
  \textit{Integral Equations Operator Theory}, \textbf{59} (2007),
  no. 1, 67--98.

\bibitem{QV-Ball1} \textsc{R. Quiroga-Barranco, N. Vasilevski},
  Commutative $C^*$-algebras of Toeplitz operators on the unit
  ball. I. Bargmann-type transforms and spectral representations of
  Toeplitz operators, \textit{Integral Equations Operator Theory},
  \textbf{59} (2007), no. 3, 379--419.

\bibitem{QV-Ball2} \textsc{R. Quiroga-Barranco, N. Vasilevski},
  Commutative $C^*$-algebras of Toeplitz operators on the unit
  ball. II. Geometry of the level sets of symbols, \textit{Integral
    Equations Operator Theory}, \textbf{60} (2008), no. 1, 89--132.

\bibitem{Schli} \textsc{M. Schlichenmaier}, Berezin-Toeplitz
  quantization for compact Kahler Manifolds.  A review of results,
  \textit{Adv. Math. Phys.}, 2010, Art. ID 927280, 38 pp.

\bibitem{NikolaiQuasi} \textsc{N. Vasilevski}, Quasi-radial
  quasi-homogeneous symbols and commutative Banach algebras of
  Toeplitz operators, \textit{Integral Equations Operator Theory},
  \textbf{66} (2010), no. 1, 141--152.

\bibitem{Zhu} \textsc{K. Zhu}, \textit{Spaces of Holomorphic Functions
    in the Unit Ball}, Springer Verlag, 2005.
\end{thebibliography}
\end{document}